\documentclass{amsart}
\usepackage{graphicx}
\usepackage{pslatex}
\usepackage{amsmath,amsfonts}
\usepackage{amstext}
\usepackage{amsthm}	
\usepackage{rotating}
\usepackage{multicol}
\usepackage{nicefrac}
\usepackage{amssymb}
\usepackage{verbatim}
\usepackage{rotating}
\usepackage{setspace}
\usepackage{mathrsfs}
\usepackage{dsfont}
\usepackage{epstopdf}
\usepackage[dvipsnames]{xcolor}
\usepackage{fancyhdr}
\usepackage{textcomp}
\usepackage{indentfirst}
\usepackage{latexsym,verbatim,pdfsync,color}


\newtheorem{thm}{Theorem}[section]
\newtheorem{cor}[thm]{Corollary}
\newtheorem{prop}[thm]{Proposition}
\newtheorem{lem}[thm]{Lemma}
\newtheorem*{thm*}{Theorem}

\theoremstyle{definition}
\newtheorem{defn}[thm]{Definition}
\newtheorem{ese}[thm]{Example}
\newtheorem{rmk}[thm]{Remark}

\newtheorem*{nota}{Notation}
\newtheorem*{guess}{Guess}
\newtheorem*{rmk*}{Remark}


\newcommand{\dsR}{\mathds{R}}
\newcommand{\dsZ}{\mathds{Z}}
\newcommand{\dsC}{\mathds{C}}

%


\newcommand{\clC}{\mathcal{C}}
\newcommand{\clD}{\mathcal{D}}
\newcommand{\clE}{\mathcal{E}}

\newcommand{\clL}{\mathcal{L}}
\newcommand{\clO}{\mathcal{O}}

\newcommand{\clR}{\mathcal{R}}

\newcommand{\clW}{\mathcal{W}}




\newcommand{\bthm}{\begin{thm}}
\newcommand{\bproof}{\begin{proof}}
\newcommand{\eproof}{\end{proof}}
\newcommand{\ethm}{\end{thm}}
\newcommand{\bdefn}{\begin{defn}}
\newcommand{\edefn}{\end{defn}}
\newcommand{\bprop}{\begin{prop}}
\newcommand{\eprop}{\end{prop}}
\newcommand{\bcor}{\begin{cor}}
\newcommand{\ecor}{\end{cor}}                   
\newcommand{\blem}{\begin{lem}}
\newcommand{\elem}{\end{lem}}
\newcommand{\bese}{\begin{ese}}
\newcommand{\eese}{\end{ese}}
\newcommand{\boss}{\begin{oss}}                 
\newcommand{\eoss}{\end{oss}}
\newcommand{\bnota}{\begin{nota}}
\newcommand{\enota}{\end{nota}}
\newcommand{\bguess}{\begin{guess}}
\newcommand{\eguess}{\end{guess}}
\newcommand{\bequa}{\begin{equation}}
\newcommand{\eequa}{\end{equation}}
\newcommand{\brmk}{\begin{rmk}}
\newcommand{\ermk}{\end{rmk}}

%

\newcommand\quant{\advance\quantno by1
                     
\ifnum\quantno=1\qquad\else\quad\fi\forall }

\DeclareMathOperator{\sgn}{sgn}

\DeclareMathOperator{\ch}{Ch}

\newcommand{\lt}{\left(}
\newcommand{\rt}{\right)}
\newcommand{\lquad}{\left[}
\newcommand{\rquad}{\right]}
\newcommand{\lgra}{\left\{}
\newcommand{\rgra}{\right\}}

\def\im{\operatorname{Im}}
\def\re{\operatorname{Re}}

\newcommand{\pih}{\frac{\pi}{2}}

\def\p{\partial }
\def\II{I\!I}
\def\III{I\!I\!I}

\def\ER{ \color{black} }

 \title[A comparison between the Bergman and Szeg\H{o} kernels of $D'_\beta$ ]{A comparison between the Bergman and\\ Szeg\H{o} kernels of the non-smooth worm domain $D'_\beta$}
 \author{Alessandro Monguzzi}
 \date{\today}

\numberwithin{equation}{section}

\begin{document}

\begin{abstract}
In this work we provide an asymptotic expansion for the Szeg\H{o} kernel associated to a suitably defined Hardy space on the non-smooth worm domain $D'_\beta$. After describing the singularities of the kernel, we compare it with an asymptotic expansion of the Bergman kernel. In particular, we show that the Bergman kernel has the same singularities of the first derivative of the Szeg\H{o} kernel with respect to any of the variables. On the side, we prove the boundedness of the Bergman projection operator on Sobolev spaces of integer order.
\end{abstract}

\address{Dipartimento di Matematica\\ 
Universit\`{a} Statale di Milano\\ Via C. Saldini 
50, 20133 Milan\\
Italy}
\email{alessandro.monguzzi@unimi.it}

\subjclass[2010]{32A25, 32A35, 32A36}
\thanks{The author is partially supported by the grant PRIN 2010-11 {\em Real and Complex Manifolds: Geometry, Topology and Harmonic Analysis} of the Italian Ministry of Education (MIUR)}

\keywords{Hardy spaces, Worm Domains, Szeg\H{o} kernel, Bergman kernel }
\maketitle
\section{Introduction and Main Results}
A classical problem in complex analysis is the study of the Szeg\H{o} projection operator associated to a domain $\Omega\subseteq\dsC^n$. If $\rho$ is a defining function for $\Omega$, i.e., $\Omega=\{z\in\dsC^n:\rho(z)<0\}$ and $\nabla \rho\neq0$ on the topological boundary $b\Omega$, the Hardy space $H^2(\Omega)$ is classically defined as 
$$
H^2(\Omega)=\left\{F\ \text{holomorphic in}\ \Omega: \|F\|^2_{H^2(\Omega)} =\sup_{\varepsilon>0}\int_{b\Omega_\varepsilon}|F(\zeta)|^2\ d\sigma_{\varepsilon}<\infty \right\}
$$
where $\Omega_\varepsilon=\{z\in\dsC^n:\rho(z)<-\varepsilon\}$ and $d\sigma_\varepsilon$ is the Euclidean measure induced on $b\Omega_\varepsilon$. 
Every function $F$ in $H^2(\Omega)$ admits a boundary value function $\widetilde{F}$ and the linear space of these boundary value functions defines a closed subspace of $L^2(b\Omega)$ which we denote by $H^2(b\Omega)$. The Szeg\H{o} projection operator is the orthogonal projection operator 
$$
\widetilde{S_{\Omega}}: L^2(b\Omega)\to H^2(b\Omega).
$$

The operator $\widetilde{S_\Omega}$ has an integral representation by means of the Szeg\H{o} kernel $K_{\Omega}$; we refer to \cite{MR0473215} for more details. The geometry of the domain $\Omega$ is reflected in the kernel $K_{\Omega}$, hence on the mapping properties of $\widetilde{S_\Omega}$. The behavior of the Szeg\H{o} projection has been extensively studied in the last 40 years in many different settings; among others, we refer the reader to the papers \cite{MR0450623, MR773403, MR835396, MR871667, MR971689, MR999739, MR979602, MR1133741, MR1452048, MR1094488, MR1381988, MR906810, MR1452048, MR2030575, MR3084008, MR3145917, 2015arXiv150400287M} and the references therein. 

The smooth worm domain $\clW=\clW_\beta$ does not
belong to any of the known situations. The domain $\clW$ was first introduced by Diederich and Forn\ae ss in \cite{MR0430315} as a counterexample to certain classical conjectures about the geometry of pseudoconvex domains; for instance, the domain $\clW$ is an example of a smooth bounded pseudoconvex domain with nontrivial \emph{Nebenh\"{u}lle}.

For $\beta>\pih$ , the worm is the domain
\begin{equation}\label{SmoothWorm}
\clW=\{(z_1,z_2)\in\dsC^2:|z_1-e^{i\log|z_2|^2}|^2<1-\eta(\log|z_2|^2), z_2\neq 0\},
\end{equation}
where $\eta$ is a smooth, even, convex, non-negative function on the real line, chosen so that $\eta^{-1}(0)=[-\beta+\pih,\beta-\pih]$ and so that $\clW$ is bounded, smooth and weakly pseudoconvex. We refer to the survey paper \cite{MR2393268} for a history of the study of the worm domain and related problems.

 Due to the peculiarity of the worm domain and the lack of general results regarding the regularity of the Szeg\H{o} projection of smooth bounded weakly pseudoconvex domains,  the study of the operator $\widetilde{S_{\clW}}$ is a natural and interesting question.
 
 In order to obtain information about $\widetilde{S_{\clW}}$, it is useful to study the same problem for a simpler model of $\clW$, that is, the non--smooth worm domain $D'_\beta$. 
  Recently the author studied in \cite{2015arXiv150400287M} the $L^p$ and Sobolev mapping properties of the Szeg\H{o} projection operator associated to $D'_\beta$, namely,
$$
D'_\beta=\lgra (z_1,z_2)\in \dsC^2 : \left|\im z_1-\log|z_2|^2 \right|<\frac{\pi}{2}, \left|\log|z_2|^2\right|<\beta-\frac{\pi}{2}\rgra.
$$

\begin{figure}[h]
\begin{center}
\includegraphics[width=10cm]{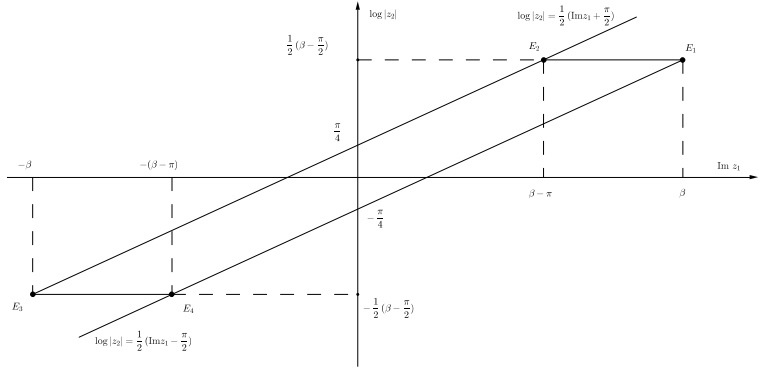}
\end{center}
\caption{A representation of the domain $D'_\beta$ in the $(\im z_1,\log|z_2|)$-plane.}\label{WormStrip}
\end{figure}

The domain $D'_\beta$ is a simpler model of $\clW$ which has been already used to study the mapping properties of the Bergman projection operator associated to $\clW$, that is, the Hilbert space projection operator from $L^2(\clW)$ onto the closed subspace of holomorphic functions. For results concerning  the Bergman projection and kernel of the worm domain, the role of the domain $D'_\beta$ and related results we refer the reader to \cite{MR1149863, MR2904008, MR1370592, MR1128596,2014arXiv1410.8490K,  MR2448387, 2015arXiv150400287M} and the references therein.

  In \cite{2015arXiv150400287M} the Hardy space $H^2(D'_\beta)$ is defined as the function space
  $$
  H^2(D'_\beta)=\Big\{F\ \text{holomorphic in }\ D'_\beta: \|F\|^2_{H^2(D'_\beta)}=\sup_{(t,s)\in [0,\pih)\times[0,\beta-\pih)}\clL_2F(t,s)<\infty\Big\},
  $$
where 
\begin{align*}
&\clL_2F (t,s)=\\
&\int\limits_{\dsR}\int\limits_{0}^{1}\left|F\lt x+i(s+t),e^{\frac{s}{2}}e^{2\pi i\theta} \rt\right|^2 d\theta dx +\int\limits_{\dsR}\int\limits_{0}^{1}\left|F\lt x-i(s+t),e^{-\frac{s}{2}}e^{2\pi i\theta} \rt\right|^2 d\theta dx\\
& +\int\limits_{\dsR}\int\limits_{0}^{1}\left|F\lt x+i(s-t),e^{\frac{s}{2}}e^{2\pi i\theta} \rt\right|^2\ d\theta dx\ +\int\limits_{\dsR}\int\limits_{0}^{1}\left|F\lt x-i(s-t),e^{-\frac{s}{2}}e^{2\pi i\theta} \rt\right|^2 d\theta dx. 
\end{align*}
Therefore, the space $H^2(D'_\beta)$ is defined considering a growth condition not on the topological boundary $bD'_\beta$, but on the distinguished boundary $\p D'_\beta$.  In detail, the distinguished boundary $\partial D'_\beta$ is the set  
\begin{align*}
\partial D'_\beta&=E_1\cup E_2\cup E_3\cup E_4,
\end{align*}
where
\begin{align*}
&E_1=\lgra(z_1,z_2): \im z_1=\beta,\log|z_2|^2=\beta-\pih\rgra;\\
&E_2=\lgra(z_1,z_2): \im z_1=\beta-\pi,\log|z_2|^2=\beta-\pih\rgra; \quad \\
&E_3=\lgra(z_1,z_2)\: \im z_1=-\beta,\log|z_2|^2=-\lt\beta-\pih\rt\rgra;\\
&E_4=\lgra(z_1,z_2): \im z_1=-(\beta-\pi),\log|z_2|^2=-\lt\beta-\pih\rt\rgra.
\end{align*}

Adapting a decomposition introduced by Barrett in \cite{MR1149863} and using some classical results for the Hardy spaces of a strip,  the following result is proved in \cite{2015arXiv150400287M}.
\begin{thm}[\cite{2015arXiv150400287M}]
The Hardy space $H^2(D'_\beta)$ is a reproducing kernel Hilbert space with respect to the inner product
$$
\big<F,G\big>_{H^2(D'_\beta)}:=\big<\widetilde F,\widetilde G\big>_{L^2(\p D'_\beta)}
$$
where $\widetilde{F}$ and $\widetilde{G}$ are the boundary value functions of $F$ and $G$ respectively. 
Moreover, the reproducing kernel of $H^2(D'_\beta)$, i.e., the Szeg\H{o} kernel, is given by
\begin{align}\label{KernelSum} \nonumber
K_{D'_\beta}[(w_1,w_2),(z_1,z_2)]&=\sum_{j\in\dsZ}{w_2}^j \overline{z_2}^jk_{j}(w_1,z_2)\\
&=\sum\limits_{j\in\dsZ}\frac {{w_2}^j \overline{z_2}^j}{8\pi}\int_{\dsR}\frac{e^{i(w_1-\overline{z_1 })\xi}}{\ch[\pi\xi]\ch[(2\beta-\pi)(\xi-\frac{j}{2})]}\ d\xi, 
\end{align}
where the series converges in $H^2(D'_\beta)$ for every fixed $(z_1,z_2)$ in $D'_\beta$.
\end{thm}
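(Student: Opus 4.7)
The plan is to exploit the $S^1$-symmetry of $D'_\beta$ in the $z_2$-variable and reduce the problem to a family of one-variable weighted Hardy-space problems on the strip $\{|\im z_1|<\beta\}$. I would begin by Laurent-expanding any $F\in H^2(D'_\beta)$ as
$$F(w_1,w_2)=\sum_{j\in\dsZ}w_2^{j}f_j(w_1),$$
with coefficients defined by Cauchy's integral on circles $|w_2|=r$. Varying $r$ across its admissible range shows that each $f_j$ extends holomorphically to the full strip $\{|\im w_1|<\beta\}$. Applying Parseval in $\theta$ to each of the four integrals defining $\clL_2F(t,s)$ decouples the $j$-indices, producing four weighted boundary $L^2$-norms of $f_j$ per $j$.

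Next I would Fourier-transform in $\re w_1$, writing $f_j(x+ia)=\int_\dsR\widehat{f_j}(\xi)e^{-a\xi}e^{ix\xi}\,d\xi$, and apply Plancherel so that each boundary norm becomes $2\pi\int|\widehat{f_j}(\xi)|^2 e^{-2a\xi}\,d\xi$ with $a\in\{\pm(s+t),\pm(s-t)\}$. Combining these four exponentials with the $e^{\pm js}$ prefactors and twice applying the sum-to-product identity $\ch[A]+\ch[B]=2\ch[(A+B)/2]\ch[(A-B)/2]$, the $j$-th density collapses to $8\pi\,\ch[2t\xi]\ch[(j-2\xi)s]$, which is separately increasing in $t\in[0,\pih)$ and $s\in[0,\beta-\pih)$ (since $\ch$ is even and increasing on $[0,\infty)$). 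Hence the sup defining $\|F\|_{H^2(D'_\beta)}^2$ coincides with the corner limit, yielding
$$\|F\|_{H^2(D'_\beta)}^2 = 8\pi\sum_{j\in\dsZ}\int_\dsR |\widehat{f_j}(\xi)|^2\,\ch[\pi\xi]\,\ch[(2\beta-\pi)(\xi-\tfrac{j}{2})]\,d\xi,$$
and identifying $H^2(D'_\beta)$ with an $\ell^2$-direct sum of weighted $L^2$-spaces on the Fourier side.

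The Szeg\H{o} kernel is then read off: imposing the reproducing property $\langle F,K_{D'_\beta}(\cdot,(z_1,z_2))\rangle_{H^2(D'_\beta)}=F(z_1,z_2)$ forces the Fourier transform (in $w_1$) of the $j$-th Laurent coefficient (in $w_2$) of $K_{D'_\beta}(\cdot,(z_1,z_2))$ to equal $\overline{z_2}^{\,j}e^{-i\overline{z_1}\xi}/(8\pi\,\ch[\pi\xi]\ch[(2\beta-\pi)(\xi-\tfrac{j}{2})])$; Fourier-inverting and summing in $j$ yields exactly the formula in the statement. I would close by verifying $H^2$-convergence of the series for each $(z_1,z_2)\in D'_\beta$: by orthogonality, the $H^2$-norm of the $j$-th term equals $|z_2|^{2j}k_j(z_1,z_1)$, and a $\ch$-asymptotics estimate for $k_j(z_1,z_1)$ shows exponential decay in $|j|$ at a rate that, combined with the growth of $|z_2|^{2j}$, yields summability precisely under the two strict inequalities defining $D'_\beta$. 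The main technical obstacle I anticipate is the monotonicity argument justifying the sup-as-limit step: this is invisible at the $(t,s)$-level and becomes transparent only after the $j$-th density is re-expressed on the Fourier side as the product of two hyperbolic cosines.
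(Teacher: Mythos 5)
Your proposal is correct and takes essentially the same route as the proof the paper attributes to \cite{2015arXiv150400287M}, namely adapting Barrett's decomposition together with classical Hardy-space theory on a strip: your Laurent expansion in $w_2$ with coefficients extending to $\{|\im w_1|<\beta\}$, the Plancherel reduction to weighted norms, the density $8\pi\ch[2t\xi]\ch[(j-2\xi)s]$, and the monotone corner limit recovering the weight $\ch[\pi\xi]\ch[(2\beta-\pi)(\xi-\tfrac{j}{2})]$ all check out, and the kernel then reads off exactly as in \eqref{KernelSum}. The convergence argument via $\|w_2^j\overline{z_2}^j k_j(\cdot,z_1)\|^2_{H^2(D'_\beta)}=|z_2|^{2j}k_j(z_1,z_1)$ and hyperbolic-cosine asymptotics is likewise the standard closing step.
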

 
 We point out that we use the notation $\ch(x)$ instead of $\cosh(x)$ to denote the hyperbolic cosine of $x$. 

Unlike the Bergman case studied by Krantz and Peloso in \cite{MR2448387}, the boundedness results in \cite{2015arXiv150400287M} are proved without relying on an asymptotic expansion of the sum in \eqref{KernelSum}. Nevertheless, following \cite{MR2448387}, we provide in this work and asymptotic expansion of the Szeg\H{o} kernel $K_{D'_\beta}$ in order to enrich and complete the study of the Hardy spaces of $D'_\beta$ begun in \cite{2015arXiv150400287M}. In detail, we prove the following result.
\begin{thm}\label{t:kernel}
Let be $\beta>\pi$ and define $\nu_\beta=\frac{\pi}{2\beta-\pi}$. Let $h$ be fixed such that $\frac{\nu_\beta}{2}<h<\min\left(\frac{1}{2},\frac{3\nu_\beta}{2}\right).$ Then, there exist functions $\rho_1,\rho_2, G_1,\ldots, G_8, E$ and $\widetilde{E}$ that are holomorphic in $w$ and anti-holomorphic in $z$, for $w=(w_1,w_2)$ and $z=(z_1,z_2)$ varying in a neighborhood of $D'_\beta$, and remain bounded, together with all their derivatives, for $w,z\in \overline{D'_\beta}$, as $|\re(w_1-\overline{z_1})|\to +\infty$ such that
\begin{align}\label{expansion}
 K&_{D'_\beta}(w,z)=e^{-\sgn[\re(w_1-\overline{z_1})]\frac{(w_1-\overline{z_1})\nu_\beta}{2}}K(w,z)+e^{-\sgn[\re(w_1-\overline{z_1})](w_1-\overline{z_1}) h}\widetilde{K}(w,z)
\end{align}
where
\begin{align*}
  K(w,z)&=\frac{\rho_1(w,z)}{e^{\frac{\pi-i(w_1-\overline{z_1})}{2}}-w_2\overline{z_2}}+\frac{\rho_2(w,z)}{e^{-\frac{i(w_1-\overline{z_1})+\pi}{2}}-w_2\overline{z_2} }+E(w,z)\\
  &:=K_1(w,z)+K_2(w,z)+E(w,z)
 \end{align*} 
and
 \begin{align*}
  \widetilde{K}(w,z)&=\frac{G_1(w,z)}{[i(w_1-\overline{z_1})+2\beta][e^{\beta-\pih}-w_2\overline{z_2}]}+\frac{G_2(w,z)}{[e^{-\frac{i(w_1-\overline{z_1})+\pi}{2}}-w_2\overline{z_2}][i(w_1-\overline{z_1})+2\beta]}\\
  &\   +\frac{G_3(w,z)}{[e^{\frac{\pi-i(w_1-\overline{z_1})}{2}}-w_2\overline{z_2}][e^{\beta-\pih}-w_2\overline{z_2}]}+\frac{G_4(w,z)}{[e^{\frac{\pi-i(w_1-\overline{z_1})}{2}}-w_2\overline{z_2}][i(w_1-\overline{z_1})-2\beta]} \\
  &\  +\frac{G_5(w,z)}{[i(w_1-\overline{z_1})-2\beta][e^{-(\beta-\pih)}-w_2\overline{z_2}]}+\frac{G_6(w,z)}{[e^{-\frac{i(w_1-\overline{z_1})+\pi}{2}}-w_2\overline{z_2}][e^{-(\beta-\pih)}-w_2\overline{z_2}]}\\
  &\ +\frac{G_7(w,z)}{e^{\frac{\pi-i(w_1-\overline{z_1})}{2}}-w_2\overline{z_2}}\!+\!\frac{G_8(w,z)}{e^{-\frac{i(w_1-\overline{z_1})+\pi}{2}}-w_2\overline{z_2}}+\widetilde{E}(w,z)\\ \smallskip
  &:=\widetilde{K_1}(w,z)+\ldots+\widetilde{K_8}(w,z)+\widetilde{E}(w,z).
\end{align*}
\end{thm}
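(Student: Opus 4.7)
The approach adapts Krantz and Peloso's analysis of the Bergman kernel of $D'_\beta$ in \cite{MR2448387} to the cleaner $\xi$-integrand of the Szeg\H{o} kernel in \eqref{KernelSum}. By symmetry under complex conjugation I reduce to $\re(w_1-\overline{z_1})>0$, the opposite sign being symmetric and explaining the factor $\sgn[\re(w_1-\overline{z_1})]$ in \eqref{expansion}. The central idea is to deform the $\xi$-contour upward and to sum over $j$ carefully. For each $j$ the $\xi$-integrand has two families of poles in the upper half-plane: $\xi=i(k+\tfrac12)$ from $\cosh(\pi\xi)$, and $\xi=\tfrac{j}{2}+i(k+\tfrac12)\nu_\beta$ from $\cosh((2\beta-\pi)(\xi-\tfrac{j}{2}))$. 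Because $\beta>\pi$ gives $\nu_\beta<1$, the nearest row to $\dsR$ is the second family at height $\nu_\beta/2$, and the hypothesis $\nu_\beta/2<h<\min(1/2,3\nu_\beta/2)$ ensures that deforming the contour from $\dsR$ to $\{\im\xi=h\}$ crosses exactly one pole per $j$, namely $\xi_j=\tfrac{j}{2}+i\nu_\beta/2$, and no pole from the second row at $3\nu_\beta/2$ nor the first $\cosh(\pi\xi)$ pole at $1/2$. The residue contribution carries weight $e^{-(w_1-\overline{z_1})\nu_\beta/2}$ while the deformed integral factors $e^{-(w_1-\overline{z_1})h}$; this is the origin of the two exponential prefactors of \eqref{expansion}.

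Summing the residues against $w_2^j\overline{z_2}^j$ gives a series $e^{-(w_1-\overline{z_1})\nu_\beta/2}\sum_j c_j(w_2\overline{z_2}e^{i(w_1-\overline{z_1})/2})^j$ with $|c_j|\lesssim e^{-\pi|j|/2}$ coming from $1/\cosh(\pi(j/2+i\nu_\beta/2))$. Splitting this cosh into its two exponential halves and expanding $1/(1+r)$ geometrically in the smaller half produces two dominant geometric sums whose closed forms have denominators $e^{(\pi-i(w_1-\overline{z_1}))/2}-w_2\overline{z_2}$ and $e^{-(i(w_1-\overline{z_1})+\pi)/2}-w_2\overline{z_2}$; these are exactly $K_1$ and $K_2$, with holomorphic numerators $\rho_1,\rho_2$. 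The rapidly decaying correction has poles outside $\overline{D'_\beta}$ and sums to the bounded holomorphic/anti-holomorphic function $E$, whose derivatives are controlled by term-by-term differentiation in the absolutely convergent series.

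The delicate step is analyzing the deformed integral at $\im\xi=h$, which I claim equals $\widetilde{K}(w,z)$. Interchanging summation with integration (justified by the exponential decay of $1/\cosh((2\beta-\pi)(t+ih-j/2))$ in $|j|$), the $j$-sum $F(\xi):=\sum_j(w_2\overline{z_2})^j/\cosh((2\beta-\pi)(\xi-j/2))$ can be expanded geometrically in the variable $w_2\overline{z_2}$, isolating the singular factors $1/(e^{\beta-\pih}-w_2\overline{z_2})$ and $1/(e^{-(\beta-\pih)}-w_2\overline{z_2})$ that appear in $\widetilde{K_1},\widetilde{K_3},\widetilde{K_5},\widetilde{K_6}$. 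The remaining $\xi$-integrals have the schematic form $\int e^{c\xi}/\cosh(\pi\xi)\,d\xi$ with $c$ linear in $w_1-\overline{z_1}$; these can be evaluated by closing the contour in the upper half-plane and summing residues at the poles of $\cosh(\pi\xi)$. The residue at $\xi=i/2$ produces weights $e^{-(w_1-\overline{z_1})/2}$ which combine with the $w_2\overline{z_2}$-singular factors to yield the denominators $e^{\pm(\pi-i(w_1-\overline{z_1}))/2}-w_2\overline{z_2}$ of $\widetilde{K_2},\widetilde{K_3},\widetilde{K_4},\widetilde{K_6},\widetilde{K_7},\widetilde{K_8}$. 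In the borderline cases where the series evaluating the $\xi$-integral has a logarithmic-type divergence on the boundary $\im(w_1-\overline{z_1})=\pm 2\beta$, the appropriate regularization yields the simple polynomial poles $1/(i(w_1-\overline{z_1})\pm 2\beta)$ appearing in $\widetilde{K_1},\widetilde{K_2},\widetilde{K_4},\widetilde{K_5}$. Multiplying out all such factor pairings reproduces exactly the eight singular terms of $\widetilde{K}$, while the fast-decaying remainder assembles into $\widetilde{E}$.

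The principal obstacle is the bookkeeping in the last step: showing that exactly the eight listed pairings of singular factors arise and that no spurious singularities escape, correctly identifying the polynomial poles $1/(i(w_1-\overline{z_1})\pm 2\beta)$ as coming from resonant confluence of the boundary values of $i(w_1-\overline{z_1})$ with a pole of $\cosh(\pi\xi)$, and verifying that each numerator $\rho_i$, $G_k$, $E$, $\widetilde{E}$ is holomorphic in $w$, anti-holomorphic in $z$ on a neighborhood of $\overline{D'_\beta}$, and bounded together with all its derivatives as $|\re(w_1-\overline{z_1})|\to\infty$. These properties must be tracked carefully through every contour deformation, Fubini interchange, and partial-fraction expansion, relying on the uniform exponential decay of the original integrand to control the remainders in a complex neighborhood of $\overline{D'_\beta}$.
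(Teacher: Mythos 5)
Your overall architecture coincides with the paper's: for $\re\tau\geq 0$ (where $\tau=w_1-\overline{z_1}$, $\lambda=w_2\overline{z_2}$) the paper integrates $g_j$ around the rectangle between $\dsR$ and $\{\im\xi=h\}$, the hypothesis $\frac{\nu_\beta}{2}<h<\min\left(\frac12,\frac{3\nu_\beta}{2}\right)$ guarantees that exactly the one pole $\xi=\frac{j}{2}+i\frac{\nu_\beta}{2}$ per $j$ is crossed, so $I_j=R_j+J_j$, and your treatment of $\sum_j R_j\lambda^j$ — splitting $1/\ch\left[\pi\left(\frac{j}{2}+i\frac{\nu_\beta}{2}\right)\right]$ into its dominant exponential plus a correction via the identity \eqref{fond-equality}, obtaining two geometric series with the denominators of $K_1,K_2$ and an error converging on a strictly larger annulus — is precisely the paper's computation of $\clR(\tau,\lambda)$, including the origin of the two exponential prefactors.

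The genuine gap is in your analysis of the shifted integral. First, the poles $1/(i\tau\pm2\beta)$ involve no divergent series, no ``regularization,'' and no confluence with a pole of $\ch(\pi\xi)$: the paper applies \eqref{fond-equality} to \emph{both} cosh factors, writing $J_j=4e^{-\tau h}\big(M_j-E^{(1)}_j-E^{(2)}_j+E^{(3)}_j\big)$, and in the main term $M_j$ the integrand is the pure exponential $e^{i\tau\xi}e^{-\pi|\xi|-(2\beta-\pi)|\xi-\frac{j}{2}|}$; splitting at the kinks $\xi=0$ and $\xi=\frac{j}{2}$, the unbounded pieces are elementary \emph{convergent} integrals such as $\int_{-\infty}^{0}e^{(i\tau+2\beta)\xi}\,d\xi=\frac{1}{i\tau+2\beta}$, where the rate $2\beta=\pi+(2\beta-\pi)$ is the combined decay of the two cosh factors on those half-lines. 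Second, your order of operations (summing in $j$ before integrating in $\xi$) breaks the step you rely on: $F(\xi)=\sum_j\lambda^j/\ch[(2\beta-\pi)(\xi-\frac{j}{2})]$ is not geometric in $\lambda$, because the sign of $\xi-\frac{j}{2}$ flips at the $\xi$-dependent index $j\approx 2\xi$; the geometric tails then have moving starting indices, and it is the $j$-sums of the kink evaluations, carrying factors $\lambda^j e^{(i\tau-\pi)\frac{j}{2}}$, that generate the oblique denominators $e^{\pm\frac{\pi\mp i\tau}{2}}-\lambda$ — not the residue of $\ch(\pi\xi)$ at $\xi=i/2$, so your ``remaining $\xi$-integrals'' are not full-line integrals $\int e^{c\xi}/\ch(\pi\xi)\,d\xi$ with $c$ independent of $\xi$. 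Moreover, a residue evaluation of such integrals via $\sec(c/2)$, with $c=i\tau\pm(2\beta-\pi)$, creates apparent poles at $\im\tau=\pm(2\beta-2\pi)$, which lie \emph{inside} the closed domain and must be shown to cancel; the paper's fixed-$j$ computation exhibits this cancellation explicitly (the factor $\big(e^{(i\tau+2\beta-2\pi)\frac{j}{2}}-e^{-(2\beta-\pi)\frac{j}{2}}\cdot e^{\,\cdot}\big)/(i\tau+2\beta-2\pi)$ in the term $\II$ is entire in $\tau$, as the paper remarks after \eqref{Mj>0}). Without the simultaneous splitting of both cosh factors, the per-$j$ kink decomposition of $M_j$, and the separate control of the error families $E^{(1)}_j,E^{(2)}_j,E^{(3)}_j$ (Propositions \ref{E1} and \ref{E2}), your outline does not yet produce the eight terms $\widetilde{K}_1,\ldots,\widetilde{K}_8$ with numerators $G_k$ holomorphic on a neighborhood of $\overline{D'_\beta}$ and bounded with all derivatives as $|\re\tau|\to+\infty$.
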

Notice that the definition of $D'_\beta$ (as well as the one of $\clW$) requires only that $\beta>\pih$. For simplicity of the arguments, we restrict ourselves to the case $\beta>\pi$. This is not a serious constraint since the most interesting situations for the worm domain occur when $\beta$ tends to $+\infty$. 

After proving the expansion \eqref{expansion}, we compare it with the expansion of the Bergman kernel contained in \cite{MR2448387}. The investigation of the relationship between the Szeg\H{o} and the Bergman kernel is a natural problem, but, to the best of the author's knowledge, not much is known about it. Stein posed the problem of investigating this relationship in \cite[pg. $20$]{MR0473215} and made some comments about some specific situations such as the case of the unit ball where the Szeg\H{o} and Bergman kernel are explicitly known. In \cite{MR979602} the authors study the Bergman and Szeg\H{o} kernels of pseudoconvex domains of finite type in $\dsC^2$. Moreover, they observe that in the special case of model domains of the form
$$
\Omega=\{(z_1,z_2)\in\dsC^2:\im z_2> P(z_1)\}
$$ 
where $P$ is a subharmonic, nonharmonic polynomial on $\dsC$, the Bergman kernel can be expressed as a derivative of the Szeg\H{o} kernel. More recently, Hirachi \cite{MR2087043} realized the asymptotic expansion of the Bergman and Szeg\H{o} kernels of strictly pseudoconvex domains as special values of a family of meromorphic functions and Chen--Fu studied in \cite{MR2836124} the ratio of the Szeg\H{o} and Bergman kernels for smooth bounded pseudoconvex domains in $\dsC^n$. Finally, Krantz shows in \cite{MR3160814} how to connect the Bergman and Szeg\H{o} kernels of strongly pseudoconvex domains via Stoke's theorem.

Here we remark a direct connection between the asymptotic expansion \eqref{expansion} of the Szeg\H{o} kernel and the asymptotic expansion of the Bergman kernel $B_{D'_\beta}$. In detail, we show that the derivative of the expansion \eqref{expansion} with respect to any of the complex variables $w_j,\overline{z_j}, j=1,2$ has the same types of singularities of the Bergman kernel. Therefore, we have a link between the Szeg\H{o} and Bergman kernels similar to the one observed in \cite{MR979602} for model domains. 

Let us recall the asymptotic expansion of the Bergman kernel proved by Krantz and Peloso. The theorem we state here is slightly different from the one stated in \cite{MR2448387}, but it is not hard to deduce it.
\begin{thm}[\cite{MR2448387}]\label{KP}
Let be $\beta>\pi$ and define $\nu_\beta=\frac{\pi}{2\beta-\pi}$ . Let $h$ be fixed such that $\nu_\beta<h<\min(1,2\nu_\beta)$. Then, there exist functions $\varphi_1, \varphi_2, F_1, \ldots, F_8$ that are holomorphic in $w$ and anti-holomorphic in $z$, for $w=(w_1, w_2)$ and $z=(z_1, z_2)$ varying in a neighborhood of $D'_\beta$, and having size $\clO(|\re w_1-\re z_1|)$,  together with all their derivatives, for $w,z\in\overline{D'_\beta}$, as $|\re(w_1-\overline{z_1})|\to+\infty$. Moreover, there exist functions $E,\widetilde{E}\in \clC^{\infty}(\overline{D'_\beta}\times\overline{D'_\beta})$ such that 
$$
D^{\alpha}_{w_1}D^\gamma_{z_1}E(w,z), D^{\alpha}_{w_1}D^\gamma_{z_1}\widetilde{E}(w,z)=\clO(|\re w_1-\re z_1|^{|\alpha|+|\gamma|}).
$$
as $|\re w_1-\re z_1|\to+\infty$. Then, the following holds. 
\bequa\label{ExpansionBergman}
B_{D'_\beta}(w,z)=e^{-\sgn[\re(w_1-\overline{z_1})](w_1-\overline{z_1})\nu_\beta }B(w,z)+ e^{-\sgn[\re(w_1-\overline{z_1})](w_1-\overline{z_1})h}\widetilde{B}(w,z).
\eequa 
The functions $B(w,z)$ and $\widetilde{B}(w,z)$ are given by

\begin{align*}
  B(w,z)&=\frac{\varphi_1(w,z)}{[e^{\frac{\pi-i(w_1-\overline{z_1})}{2}}-w_2\overline{z_2}]^2}+\frac{\varphi_2(w,z)}{[e^{-\frac{\pi+i(w_1-\overline{z_1})}{2}}-w_2\overline{z_2}]^2}+E(w,z)\\
  &:= B_1(w,z)+B_2(w,z)+E(w,z)
\end{align*}
and
\begin{align*}
  \widetilde{B}(w,z)=&\frac{F_1(w,z)}{[i(w_1-\overline{z_1})+2\beta]^2[e^{\beta-\pih}-w_2\overline{z_2}]^2}\\
  &+\frac{F_2(w,z)}{[i(w_1-\overline{z_1})+2\beta]^2[w_2\overline{z_2}-e^{-\frac{i(w_1-\overline{z_1})+\pi}{2}}]^2}\\
  &+\frac{F_3(w,z)}{[e^{\frac{\pi-i(w_1-\overline{z_1})}{2}}-w_z\overline{z_2}]^2[e^{\beta-\pih}-w_2\overline{z_2}]^2}\\
&+\frac{F_4(w,z)}{[i(w_1-\overline{z_1})-2\beta]^2[e^{\frac{\pi-i(w_1-\overline{z_1})}{2}}-w_2\overline{z_2}]^2}\\
 &+\frac{F_5(w,z)}{[i(w_1-\overline{z_1})-2\beta]^2[w_2\overline{z_2}-e^{-(\beta-\pih)}]^2}\\
 &+\frac{F_6(w,z)}{[e^{-\frac{i(w_1-\overline{z_1})+\pi}{2}}-w_2\overline{z_2}]^2[w_2\overline{z_2}-e^{-(\beta-\pih)}]^2}\\
 &+\frac{F_7(w,z)}{[i(w_1-\overline{z_1})+2\beta]^2[w_2\overline{z_2}-e^{\beta-\pih}][e^{-\frac{i(w_1-\overline{z_1})+\pi}{2}}-w_2\overline{z_2}]}\\
 &+\frac{F_8(w,z)}{[i(w_1-\overline{z_1})-2\beta]^2[w_2\overline{z_2}-e^{-(\beta-\pih)}][e^{\frac{\pi+i(w_1-\overline{z_1})}{2}}-w_2\overline{z_2}]}+\widetilde{E}(w,z)\\
 &:=\widetilde{B}_1(w,z)+\ldots+\widetilde{B}_8(w,z)+\widetilde{E}(w,z).
\end{align*}
\end{thm}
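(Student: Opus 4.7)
The plan is to follow the strategy of \cite{MR2448387}: first obtain a Fourier-type decomposition of $B_{D'_\beta}$ analogous to \eqref{KernelSum}, then extract the asymptotic expansion by contour deformation in the dual variable $\xi$, and finally reassemble the residues into the stated closed form via a geometric summation in $j$. The invariance of $D'_\beta$ under $z_2 \mapsto e^{i\theta} z_2$ and under real translations of $z_1$ yields the decomposition
\begin{align*}
B_{D'_\beta}(w,z) = \sum_{j \in \dsZ} w_2^j \overline{z_2}^j\, b_j(w_1 - \overline{z_1}),
\end{align*}
where each slice kernel $b_j$ admits an explicit Fourier integral representation along the real $\xi$-axis. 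The integrand involves double powers of hyperbolic factors (as opposed to the simple $\ch$ factors of the Szeg\H{o} kernel in \eqref{KernelSum}), a consequence of the extra weight in the Bergman $L^2$ norm; its poles are concentrated at $\xi = j/2 + i\nu_\beta k$ and at $\xi = i k/2$ for $k \in \dsZ$, with multiplicity up to two.

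Assume without loss of generality that $\re(w_1 - \overline{z_1}) > 0$; the opposite case follows by a symmetric contour shift. I would deform the contour from $\im\xi = 0$ upward to $\im\xi = h$, collecting residues at all poles in the strip $0 < \im\xi < h$. Under the constraint $\nu_\beta < h < \min(1, 2\nu_\beta)$, the only relevant poles at this stage are the double poles $\xi_0(j) = j/2 + i\nu_\beta$. Each double-pole residue produces $e^{-(w_1 - \overline{z_1})\nu_\beta}\, e^{i(w_1 - \overline{z_1})j/2}$ multiplied by a coefficient that is \emph{affine} in $(w_1 - \overline{z_1})$ (the linear term arising from the derivative appearing in the double-pole residue formula), uniformly bounded in $j$ up to a geometric $e^{-\pi |j|}$-type factor coming from the second hyperbolic factor evaluated at $\xi_0(j)$. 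Multiplying by $w_2^j \overline{z_2}^j$ and summing separately over $j \geq 1$ and $j \leq -1$, the resulting geometric-type series sums in closed form; the linear-in-$j$ component of the residue coefficient, via the identity $\sum_{j \geq 1} j x^j = x/(1-x)^2$, produces precisely the \emph{squared} denominators $[e^{(\pi - i(w_1 - \overline{z_1}))/2} - w_2\overline{z_2}]^2$ and $[e^{-(\pi + i(w_1 - \overline{z_1}))/2} - w_2\overline{z_2}]^2$ appearing in $B_1$ and $B_2$, while the affine dependence on $(w_1 - \overline{z_1})$ is what yields the $\clO(|\re w_1 - \re z_1|)$ size of $\varphi_1, \varphi_2$. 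Residues at $\xi = 0$ (for $j \neq 0$), the $j = 0$ contribution, and the smooth remainders are absorbed into $E(w,z)$.

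To obtain $\widetilde B(w,z)$, I would push the contour further, to $\im\xi = h + \varepsilon$, and collect the next layer of residues. Three types arise, and, after summation in $j$, match exactly the eight terms of the statement: (i) poles of the $j$-independent hyperbolic factor at $\xi = i$ reorganize into $[i(w_1 - \overline{z_1}) \pm 2\beta]^2$ denominators, via the identification of the $j$-sum as the derivative of a geometric series centered at $w_2\overline{z_2} = e^{\pm(\beta - \pih)}$; (ii) the second-layer double poles at $\xi = j/2 + 2i\nu_\beta$ give new exponential denominators $[e^{-(\pi + i(w_1 - \overline{z_1}))/2} - w_2\overline{z_2}]^2$; (iii) cross configurations in which poles of the two families coexist in the strip produce the mixed products in $\widetilde B_3, \widetilde B_4, \widetilde B_7, \widetilde B_8$, with the asymmetric first-vs-squared powers in $\widetilde B_7, \widetilde B_8$ reflecting a mismatch in pole orders between the two families at those specific collisions. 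The remaining integral along $\im\xi = h + \varepsilon$ defines $\widetilde E$; its derivative growth $\clO(|\re w_1 - \re z_1|^{|\alpha| + |\gamma|})$ follows from integration by parts in $\xi$, trading each $w_1$- or $\overline{z_1}$-derivative for an additional polynomial factor in $\xi$ inside the integrand, combined with the uniform decay of the hyperbolic denominators along the deformed contour.

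The principal obstacle is the combinatorial bookkeeping of the residue contributions across all $j \in \dsZ$: in particular, the collision cases $j \in \{0, \pm 1, \pm 2\}$ in which poles of the two hyperbolic families coincide or lie close to the shifted contours require local Taylor- or L'H\^opital-type expansions in place of the naive double-pole formula. A secondary technical point is verifying the uniform holomorphic extension of each $\varphi_k, F_k$ to a full neighborhood of $\overline{D'_\beta}$ together with the sharp $\clO(|\re w_1 - \re z_1|)$ bound, which requires careful control of the tails of the geometric sums and justification of the exchange of the summation in $j$ with the contour deformation in $\xi$.
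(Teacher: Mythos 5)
Note first that the paper does not actually prove this statement: it is quoted from Krantz--Peloso \cite{MR2448387}, with only the remark that the version stated here ``is not hard to deduce'' from theirs. The in-paper model for such an argument is the Szeg\H{o} analogue proved in Section \ref{kernel}, and your overall architecture (slice decomposition $B_{D'_\beta}=\sum_j w_2^j\overline{z_2}^j b_j(w_1-\overline{z_1})$, contour shift to height $h$, residues, geometric resummation in $j$) is indeed the right skeleton. But your execution has two genuine gaps. First, the pole structure is misidentified: for the Bergman weight one finds, by integrating the weight over the fibers, that $b_j$ has a Fourier integrand whose denominator is $\sinh(\pi\xi)\,\sinh[(2\beta-\pi)(\xi-\frac{j}{2})]$ \emph{divided by} the polynomial numerator $\xi(\xi-\frac{j}{2})$ (up to an index shift), so the poles are \emph{simple}, located at $\xi=ik$ (not $\xi=ik/2$; that is the $\ch$ pattern of the Szeg\H{o} case) and $\xi=\frac{j}{2}+i\nu_\beta k$, $k\neq0$; in the strip $0<\im\xi<h<\min(1,2\nu_\beta)$ only the simple poles $\frac{j}{2}+i\nu_\beta$ occur, and the two families never collide there, so your ``double poles,'' the affine-in-$\tau$ residue coefficients, and the collision cases $j\in\{0,\pm1,\pm2\}$ are all phantoms. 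The part of your mechanism that survives is the resummation $\sum_{j\geq1}jx^j=x/(1-x)^2$: the squared denominators of $B_1,B_2$ come from the linear-in-$j$ factor that the polynomial numerator contributes to each \emph{simple}-pole residue, not from a double-pole residue formula.

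The second gap is fatal to your construction of $\widetilde{B}$. Pushing the contour to $\im\xi=h+\varepsilon$ and collecting a ``next layer'' of residues can only ever produce terms $e^{i\tau\xi_0}$ times functions of $\lambda=w_2\overline{z_2}$, i.e.\ exponentials in $\tau=w_1-\overline{z_1}$; no residue can generate the rational-in-$\tau$ denominators $[i(w_1-\overline{z_1})\pm2\beta]^2$, which are singular at the finite boundary values $\im(w_1-\overline{z_1})\to\pm2\beta$ (the corners $E_1,\dots,E_4$). In \cite{MR2448387}, as in the Szeg\H{o} computation of Section \ref{kernel}, all eight terms of $\widetilde{B}$ come from the leftover integral along $\im\xi=h$ itself: one expands the integrand by an identity of the type \eqref{fond-equality}, splits $\dsR$ at $0$ and $\frac{j}{2}$, and evaluates Laplace-type integrals explicitly --- e.g.\ $\int_{-\infty}^{0}\xi\,e^{(i\tau+2\beta)\xi}\,d\xi=-1/(i\tau+2\beta)^2$, which is where the squared corner denominators actually originate --- and then resums in $j$ to produce the $\lambda$-denominators; every such term automatically carries the prefactor $e^{-\tau h}$ with the \emph{same} fixed $h$. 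Your second push instead yields prefactors $e^{-2\nu_\beta\tau}$ or $e^{-\tau}$ rather than $e^{-h\tau}$ (and since $h$ is an arbitrary fixed number in $(\nu_\beta,\min(1,2\nu_\beta))$, there is no pole layer at height $h$ to collect). The delicate points of the real proof are not pole collisions but the split points $\xi=0,\frac{j}{2}$, the small-$j$ cases, and the uniform-in-$k$ control of the geometric error series --- exactly the $M_j$, $E^{(1)}_j$, $E^{(2)}_j$, $E^{(3)}_j$ bookkeeping carried out for the Szeg\H{o} kernel in Section \ref{kernel}, which is the template you should adapt.
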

Then, we prove the following result.

\begin{thm}\label{t:comparison}
Let $(\tilde{w},\tilde{z})$ be a point in the topological boundary $bD'_\beta$ of the non-smooth worm domain $D'_\beta$. Then,
\[
\lim_{(w,z)\to(\tilde w,\tilde z)}\left|\frac{\p}{\p w_1}K_{D'_\beta}(w,z)\Big/B_{D'_\beta}(w,z)\right|=C(\tilde w,\tilde z)
\]
where $C(\tilde w,\tilde z)$ is a positive constant depending only on the point $(\tilde w,\tilde z)$. The same conclusion holds if we consider the derivative of $K_{D'_\beta}$ with respect to any of the variables $w_2, \overline{z_1}$ or $\overline z_2$.
\end{thm}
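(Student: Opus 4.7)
The plan is to differentiate the Szeg\H{o} expansion \eqref{expansion} provided by Theorem \ref{t:kernel} term-by-term with respect to $w_1$, and then match the resulting singular structure against the Bergman expansion \eqref{ExpansionBergman} from Theorem \ref{KP}. The key observation, suggested by the parallel layout of the two theorems, is that each denominator appearing to the first power in \eqref{expansion} appears to the second power in the corresponding term of \eqref{ExpansionBergman}, and that differentiation with respect to $w_1$ converts simple poles in the variable $w_1-\overline{z_1}$ into exactly the right double poles. Concretely,
\[
\frac{\partial}{\partial w_1}\frac{1}{e^{(\pi-i(w_1-\overline{z_1}))/2}-w_2\overline{z_2}}=\frac{(i/2)e^{(\pi-i(w_1-\overline{z_1}))/2}}{[e^{(\pi-i(w_1-\overline{z_1}))/2}-w_2\overline{z_2}]^2},\qquad \frac{\partial}{\partial w_1}\frac{1}{i(w_1-\overline{z_1})\pm 2\beta}=\frac{-i}{[i(w_1-\overline{z_1})\pm 2\beta]^2},
\]
and the numerator factors produced are non-vanishing at every finite point.

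Carrying out $\partial_{w_1}$ on \eqref{expansion}, the derivative will fall on four types of factors. It may act on the exponential prefactors $e^{-\sgn[\re(w_1-\overline{z_1})](w_1-\overline{z_1})\nu_\beta/2}$ or $e^{-\sgn[\re(w_1-\overline{z_1})](w_1-\overline{z_1})h}$, producing a bounded multiplicative factor and preserving the form; on the holomorphic numerators $\rho_j,G_j$ and the smooth remainders $E,\widetilde E$, which are bounded together with all their derivatives and so give only smoother, non-dominant contributions; or on one of the $w_1$-dependent denominators, which are squared as described above. Collecting terms, the result is that $\partial_{w_1}K_{D'_\beta}(w,z)$ admits an expansion in which the leading singular contributions have exactly the same denominators as $B_1,B_2,\widetilde B_1,\ldots,\widetilde B_8$ in \eqref{ExpansionBergman}, with non-vanishing coefficient factors coming from the $\rho_j,G_j$ multiplied by $(i/2)e^{(\pi\mp i(w_1-\overline{z_1}))/2}$ or $\mp i$.

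Fix now a point $(\tilde w,\tilde z)\in bD'_\beta$. The exponential prefactors and the holomorphic numerators $\rho_j,\varphi_j,G_j,F_j$ all remain bounded and tend to finite limits as $(w,z)\to(\tilde w,\tilde z)$. The denominators that actually vanish at $(\tilde w,\tilde z)$ are determined by which face of $bD'_\beta$ the limit point lies on: a generic point on the slant face $|\im z_1-\log|z_2|^2|=\pih$ kills exactly one of the factors $e^{(\pi\mp i(w_1-\overline{z_1}))/2}-w_2\overline{z_2}$; a point on the base face $|\log|z_2|^2|=\beta-\pih$ kills one of $e^{\pm(\beta-\pih)}-w_2\overline{z_2}$; and on the edges two or more such factors vanish at once. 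In each case, for every vanishing denominator the Bergman expansion contains a double pole and the differentiated Szeg\H{o} expansion contains the matching double pole produced by the computation above, so the orders of blow-up in $\partial_{w_1}K_{D'_\beta}$ and $B_{D'_\beta}$ coincide. Dividing and passing to the limit, the common singular factors cancel and the ratio converges to a finite non-zero constant $C(\tilde w,\tilde z)$ determined by the ratios of the coefficient functions at $(\tilde w,\tilde z)$ and the derivative-generated factors $(i/2)e^{(\pi\mp i(\tilde w_1-\overline{\tilde z_1}))/2}$ or $\mp i$.

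The main obstacle will be the bookkeeping at distinguished boundary points, where two or more denominators vanish simultaneously: a Bergman term such as $\widetilde B_j$ has a doubled pole in each of two different factors, while a single derivative $\partial_{w_1}$ applied to the corresponding $\widetilde K_j$ only doubles the factor depending on $w_1$. Handling these edge and corner cases cleanly will require either identifying the contributions of the other summands $\widetilde K_1+\cdots+\widetilde K_8$ that combine with $\partial_{w_1}\widetilde K_j$ to reproduce the matching singularity, or exploiting vanishing properties of the $G_j,F_j$ along the distinguished boundary implicit in the construction of Theorems \ref{t:kernel} and \ref{KP}. Once this edge analysis is settled, the conclusion for the other variables $w_2,\overline{z_1},\overline{z_2}$ follows by the same argument, differentiating the denominators of the form $e^{\cdots}-w_2\overline{z_2}$ (which are symmetric in the pair $(w_2,\overline{z_2})$) or those depending on $\overline{z_1}$ instead, and using the symmetry of \eqref{expansion} and \eqref{ExpansionBergman} under the substitution $w_1\leftrightarrow\overline{z_1}$, $w_2\leftrightarrow\overline{z_2}$.
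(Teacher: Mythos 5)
Your proposal follows essentially the same route as the paper: the paper likewise differentiates the expansion \eqref{expansion} in $w_1$, records the resulting expansion with exactly the pole structure you compute (in particular, a single $\partial_{w_1}$ squares only the $w_1$-dependent factors, so the term coming from $\widetilde{K}_1$ acquires denominator $\big[i(w_1-\overline{z_1})+2\beta\big]^2\big[e^{\beta-\pih}-w_2\overline{z_2}\big]$ with the $w_2\overline{z_2}$-factor still to the first power), and then concludes by comparing this with \eqref{ExpansionBergman}. The ``main obstacle'' you flag at the distinguished boundary --- that the Bergman terms $\widetilde{B}_j$ carry double poles in two factors while $\partial_{w_1}\widetilde{K}_j$ doubles only one --- is genuine, but the paper does not resolve it either: its proof simply asserts that ``the conclusion follows comparing'' the two expansions, so your attempt reproduces the published argument and is, if anything, more explicit about where the remaining bookkeeping lies.
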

Finally, we conclude the paper with a result about the mapping properties of the Bergman projection operator $P_{D'_\beta}$. Let $A^p(D'_\beta)$ denotes the Bergman space, that is the space of functions in $L^p$ and holomorphic on $D'_\beta$. In \cite{MR2448387} the authors study the $L^p$ mapping properties of $P_{D'_\beta}$ and, using the expansion \eqref{ExpansionBergman}, they prove the following theorem. 
\begin{thm}[\cite{MR2448387}]\label{KP2} The Bergman projection operator $P_{D'_\beta}$ extends to a bounded linear operator $P_{D'_\beta}:L^p(D'_\beta)\to A^p(D'_\beta)$ for every $p$ in $(1,+\infty)$.
\end{thm}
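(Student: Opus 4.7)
The plan is to realise $P_{D'_\beta}$ as the integral operator
\[
P_{D'_\beta}f(w)=\int_{D'_\beta} B_{D'_\beta}(w,z) f(z)\wrt V(z)
\]
and use the asymptotic expansion \eqref{ExpansionBergman} to split it as a finite sum of ten integral operators with explicit kernels (plus the two smooth errors). For each piece I would apply Schur's test with a positive test function of the form $h(z)=|z_2|^{a}(1+|\im z_1|)^{b}$, where $(a,b)$ depends on $p$ and on which of the singular terms $B_1, B_2, \widetilde B_1,\dots,\widetilde B_8$ is being estimated, and then verify that the two integrals $\int |B_{D'_\beta}(w,z)|\,h(z)^{p'}\wrt V(z)$ and its transpose are bounded by $Ch(w)^{p'}$ and $Ch(z)^{p}$ respectively.

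A crucial simplification is that $D'_\beta$ is invariant under the horizontal translation $z_1\mapsto z_1+t$, $t\in\dsR$, so each of the singular kernels depends on $\re z_1$ and $\re w_1$ only through $\tau:=\re(w_1-\overline z_1)$. One can therefore freeze $\tau$ and reduce each Schur integral, after integrating out $\re z_1$, to a one-dimensional exponential integral $\int_\dsR e^{-\nu_\beta|\tau|}F(\tau)\wrt\tau$ or $\int_\dsR e^{-h|\tau|}\widetilde F(\tau)\wrt\tau$, where $F$ and $\widetilde F$ are bounded and smooth. The choice $\nu_\beta<h<\min(1,2\nu_\beta)$ guarantees that the decay in the $\widetilde B$-part is strictly faster than in the $B$-part, so the two contributions do not compete with each other. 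For the error terms $E$ and $\widetilde E$, the uniform boundedness of $E,\widetilde E$ on $\overline{D'_\beta}\times\overline{D'_\beta}$ together with the exponential decay makes the associated kernels integrable in each variable separately (since $\im z_1$ and $\log|z_2|^2$ range over bounded intervals on $D'_\beta$), so boundedness on $L^p$ follows directly from Young's inequality.

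The main obstacle lies in the ``worm-type'' terms $\widetilde B_3,\widetilde B_4,\widetilde B_6$, whose kernels carry two polar denominators --- one associated with the smooth faces $\im z_1=\log|z_2|^2\pm\pih$ and one with the caps $\log|z_2|^2=\pm(\beta-\pih)$. There one partitions $D'_\beta$ according to which of the two factors is dominant and runs Schur's test separately on each region, making sure the test-function exponents $(a,b)$ match across the partition interfaces. A second delicate point is the sign change in the exponential factor at $\tau=0$: one must split into the regions $\tau>0$ and $\tau<0$, treat the cross-terms (where $\sgn[\re(w_1-\overline z_1')]$ in a Schur $z$-integration differs from $\sgn[\re(w_1-\overline z_1)]$ in the original kernel) by a direct estimation using the symmetry $B_{D'_\beta}(w,z)=\overline{B_{D'_\beta}(z,w)}$, and verify that the resulting bound on $\int|B_{D'_\beta}(w,z)|h(z)^{p'}\wrt V(z)$ is uniform in $\tau$. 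Assembling these pieces yields the claimed bound $\|P_{D'_\beta}f\|_{L^p}\leq C_p\|f\|_{L^p}$ for every $p\in(1,+\infty)$.
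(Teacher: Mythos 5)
Your proposal cannot be checked against an internal proof, because the paper does not prove Theorem \ref{KP2}: it is imported from \cite{MR2448387}, the text only recording that Krantz and Peloso deduce it from the expansion \eqref{ExpansionBergman}. That said, your overall architecture --- split $B_{D'_\beta}$ according to \eqref{ExpansionBergman}, use translation invariance in $\re z_1$ so that the unbounded direction is controlled by the decay factors $e^{-\nu_\beta|\tau|}$ and $e^{-h|\tau|}$, dispose of $E,\widetilde E$ by Young's inequality, and run Schur's test term by term --- is the same kind of argument as in the cited source, and the parts concerning the error terms and the $\tau$-integration are sound.

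The genuine gap is your choice of Schur weight. On $D'_\beta$ both $\log|z_2|^2$ and $\im z_1$ range over \emph{bounded} intervals ($|\log|z_2|^2|<\beta-\pih$, hence $|\im z_1|<\beta$), so $h(z)=|z_2|^{a}(1+|\im z_1|)^{b}$ is bounded above and below by positive constants for every choice of $(a,b)$. Schur's test with a weight comparable to $1$ is precisely the assertion $\sup_w\int_{D'_\beta}|B_{D'_\beta}(w,z)|\wrt V(z)<\infty$ together with its transpose, and this is false for the double-pole terms: modeling $B_1$ on the Bergman kernel of a half-plane, the $z$-integral of $|B_1(w,\cdot)|$ grows like $\log\big(1/\dist(w,bD'_\beta)\big)$ as $w$ approaches the oblique face where $w_2\overline{z_2}\to e^{\frac{\pi-i(w_1-\overline{z_1})}{2}}$. (A sanity check: a constant-weight Schur bound would yield boundedness on $L^1$ and $L^\infty$, which fails for Bergman projections.) What is needed are weights that \emph{degenerate at the boundary}, namely negative powers of the distance functions $\pih-\big|\im z_1-\log|z_2|^2\big|$ and $(\beta-\pih)-\big|\log|z_2|^2\big|$; note in particular that $|z_2|^{a}$ does not vanish at the caps $\log|z_2|^2=\pm(\beta-\pih)$, so it cannot detect the singularities of the terms containing the factors $e^{\pm(\beta-\pih)}-w_2\overline{z_2}$. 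With exponents $-\epsilon$ satisfying $0<\epsilon<\min(1/p,1/p')$, chosen per term (and, for the mixed terms $\widetilde B_3,\widetilde B_4,\widetilde B_6$, per region of your partition), the Schur integrals close for every $1<p<\infty$; this boundary-distance weighting is the missing ingredient, and it is what actually produces the full range of $p$. (A minor further point: your weight is called $h$, clashing with the exponent $h$ of \eqref{ExpansionBergman}, and the sign-change discussion at $\tau=0$ is vague, but neither is the essential defect.)
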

Here we use again \eqref{ExpansionBergman} to prove the regularity of the Bergman projection $P_{D'_\beta}$ in Sobolev scale.
\begin{thm}\label{t:Sobolev}
Let $k$ be a positive integer. Then, the Bergman projection operator extends to a bounded linear operator
$$
P_{D'_\beta}: W^{k,p}(D'_\beta)\to W^{k,p}(D'_\beta)
$$
for every $p\in(1,+\infty)$.
\end{thm}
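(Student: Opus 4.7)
The proof strategy is to reduce the Sobolev statement to the $L^p$ statement of Theorem \ref{KP2}, exploiting two symmetries of $D'_\beta$ that are already visible in the expansion \eqref{ExpansionBergman}. The domain is invariant under the translations $z_1\mapsto z_1+t$, $t\in\dsR$, and the rotations $z_2\mapsto e^{i\vartheta}z_2$, $\vartheta\in\dsR$; correspondingly, $B_{D'_\beta}(w,z)$ depends only on $w_1-\overline{z_1}$ and on the product $w_2\overline{z_2}$, as every summand in \eqref{ExpansionBergman} makes manifest. Writing $X_1=\partial/\partial(\re w_1)$ and $X_2=\partial/\partial\vartheta_2$ (with $\vartheta_2=\arg w_2$) for the infinitesimal generators of these actions, one differentiates under the kernel integral and then transfers the derivative onto $f$ via integration by parts in $z$; for smooth compactly supported $f$, and hence by density for $f\in W^{k,p}(D'_\beta)$, one obtains the commutation relations
\begin{align*}
X_1P_{D'_\beta}f=P_{D'_\beta}X_1f,\qquad X_2P_{D'_\beta}f=P_{D'_\beta}X_2f.
\end{align*}

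The next step is to interchange the group derivatives $X_1,X_2$ with the Euclidean partial derivatives that define the $W^{k,p}$-norm. On $\overline{D'_\beta}$ the modulus $|w_2|$ is bounded above and below away from zero (as $|\log|w_2|^2|<\beta-\pih$), so the coefficients of $X_2=-\im w_2\,\partial_{\re w_2}+\re w_2\,\partial_{\im w_2}$, together with $1/w_2$ and all its powers, are smooth and bounded on the closure. Iterated application of $X_1,X_2$ to a general $f\in W^{k,p}$ produces a linear combination of Euclidean partial derivatives of order at most $a+b$ with bounded smooth coefficients, giving $\|X_1^aX_2^bf\|_{L^p}\lesssim\|f\|_{W^{a+b,p}}$. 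Conversely, since $P_{D'_\beta}f$ is holomorphic, the Cauchy--Riemann equations collapse every Euclidean partial of $P_{D'_\beta}f$ of order $\le k$ into a combination of derivatives of type $\partial_{w_1}^a\partial_{w_2}^b(P_{D'_\beta}f)$; using $\partial_{w_1}F=X_1F$ and $iw_2\partial_{w_2}F=X_2F$ for a function $F$ holomorphic in $w$, each such derivative is expressible as $X_1^aX_2^b(P_{D'_\beta}f)$ times a uniformly bounded factor built from $w_2^{-1},\ldots,w_2^{-b}$.

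Combining these ingredients, for any Euclidean multi-index $\alpha$ with $|\alpha|\le k$ one writes
\begin{align*}
\partial^\alpha P_{D'_\beta}f=\sum_{a+b\le k}c_{a,b}(w)\,X_1^aX_2^bP_{D'_\beta}f=\sum_{a+b\le k}c_{a,b}(w)\,P_{D'_\beta}(X_1^aX_2^bf),
\end{align*}
with coefficients $c_{a,b}\in L^\infty(D'_\beta)$. Applying Theorem \ref{KP2} to each $X_1^aX_2^bf\in L^p(D'_\beta)$ yields
\begin{align*}
\|\partial^\alpha P_{D'_\beta}f\|_{L^p(D'_\beta)}\lesssim\sum_{a+b\le k}\|X_1^aX_2^bf\|_{L^p(D'_\beta)}\lesssim\|f\|_{W^{k,p}(D'_\beta)},
\end{align*}
which is the desired estimate.

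The main technical point to be checked is the commutation $X_jP_{D'_\beta}=P_{D'_\beta}X_j$ at the $L^p$ level: the integration by parts in $z$ produces in principle a surface integral over $bD'_\beta$, and one must verify its vanishing on each flat component. This is where the geometry of $D'_\beta$ enters directly: each defining function $\im z_1-\log|z_2|^2\pm\pih$ and $\log|z_2|^2\pm(\beta-\pih)$ is independent of $\re z_1$ and of $\arg z_2$, so both $X_1$ and $X_2$ are everywhere tangential to $bD'_\beta$, the corresponding components of the outward normal vanish, and the boundary integrals are identically zero. The remaining, routine, ingredient is the density of smooth functions in $W^{k,p}(D'_\beta)$, which holds because the defining data of $D'_\beta$ are Lipschitz.
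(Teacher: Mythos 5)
Your proposal is correct, but it takes a genuinely different route from the paper's proof. The paper argues directly from the expansion \eqref{ExpansionBergman}: since every term of $B_{D'_\beta}$ depends on $(w,z)$ only through $w_1-\overline{z_1}$ and $w_2\overline{z_2}$, differentiating the kernel in the holomorphic variables is ``the same'' as differentiating in the anti-holomorphic ones (written there as $D^k_w B_{D'_\beta}\thickapprox D^k_{\overline z}B_{D'_\beta}$, explicitly an abuse of language); it then moves $D^k_{z_1}D^l_{z_2}$ off $P_{D'_\beta}f$ onto $f$ as $D^k_{\overline{w_1}}D^l_{\overline{w_2}}$ by integration by parts, invokes Theorem \ref{KP2}, and uses the same density statement (segment condition for the Lipschitz domain) that you use. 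You instead work with the two one-parameter groups of automorphisms $z_1\mapsto z_1+t$ and $z_2\mapsto e^{i\vartheta}z_2$, of which the kernel's dependence on $w_1-\overline{z_1}$ and $w_2\overline{z_2}$ is the infinitesimal manifestation: your fields $X_1,X_2$ commute \emph{exactly} with $P_{D'_\beta}$, and you recover all Euclidean derivatives of the holomorphic function $P_{D'_\beta}f$ from $X_1^aX_2^bP_{D'_\beta}f$ via the Cauchy--Riemann equations and the bounds $0<c\le|w_2|\le C$ on $\overline{D'_\beta}$. This buys two things the paper leaves informal: the commutation is an identity rather than a $\thickapprox$-relation, and you actually account for the boundary terms in the integration by parts (tangentiality of $X_1,X_2$ to every flat piece of $bD'_\beta$, since no defining function involves $\re z_1$ or $\arg z_2$), a point on which the paper's proof is silent; moreover, your argument never needs the expansion \eqref{ExpansionBergman} at all --- only the invariance of $D'_\beta$ plus Theorem \ref{KP2} --- and indeed the commutation could be obtained even more directly from equivariance of $P_{D'_\beta}$ under these measure-preserving automorphisms, differentiating in the group parameter. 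What the paper's route buys in exchange is coherence with the rest of the paper (everything is read off the kernel expansion) and a template that would survive on domains where the kernel has this product structure only asymptotically, without an exact symmetry behind it. Two cosmetic points on your write-up: on holomorphic functions one has $\partial_{w_2}^b=w_2^{-b}\prod_{j=0}^{b-1}(-iX_2-j)$, so each Euclidean derivative is a \emph{linear combination} of the $X_1^aX_2^jP_{D'_\beta}f$ with $j\le b$ and bounded coefficients rather than a single term times a bounded factor --- your displayed sum $\sum_{a+b\le k}c_{a,b}(w)X_1^aX_2^b$ already has the correct form --- and the final passage from smooth compactly supported $f$ to general $f\in W^{k,p}(D'_\beta)$ should be closed by a weak-compactness identification of $\partial^\alpha P_{D'_\beta}f$ with the limit of $\partial^\alpha P_{D'_\beta}f_n$, which is routine.
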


We conclude the introduction with a remark on Theorem \ref{t:kernel} and on a difference between the Szeg\H{o} and the Bergman setting.

Given two biholomorphic domains $D_1$ and $D_2$, it is well- known  how to write the Bergman projection $P_{D_1}$ in term of $P_{D_2}$ and vice versa. This is a general result guaranteed by the transformation rule of the Bergman kernel under biholomorphic mappings (see, for instance, \cite{MR1846625}). Hence, from \eqref{ExpansionBergman},  Krantz and Peloso also obtain an asymptotic expansion for another important non-smooth version of the worm domain. Namely, let $D_\beta$ be the domain
$$
 D_{\beta}=\left\{(z_1,z_2)\in\dsC^2: \re\big(z_1 e^{-i\log|z_2|^2}\big)>0, \big|\log|z_2|^2\big|<\beta-\pih\right\}.
$$
Then, the domains $D'_\beta$ and $D_\beta$ are biholomorphically equivalent via the map $\varphi: D'_\beta\to D_\beta$, $(z_1,z_2)\mapsto (e^{z_1},z_2)$.
Both $D'_\beta$ and $D_\beta$ have a central role in the study of the Bergman projection attached the smooth worm $\clW$. We refer the reader to \cite{MR1149863, MR2393268} for further details.

In the Szeg\H{o} setting we lack a general transformation rule for the Szeg\H{o} kernel under biholomorphic mappings, therefore we cannot trivially use the asymptotic expansion of $K_{D'_\beta}$ in Theorem \ref{t:kernel} in order to obtain information on the Szeg\H{o} kernel attached to $D_\beta$. Thus, the Szeg\H{o} projection of the domain $D_\beta$ must be independently studied. 

The paper is organized as follows. In Section \ref{singularities} we describe the singularities of $K_{D'_\beta}$, whereas in Section \ref{kernel} we prove Theorem \ref{t:kernel}. The proof of the theorem is quite long, therefore we proceed step by step proving a series of different propositions and lemmas. In Section \ref{comparison} we compare the asymptotic expansion of the Szeg\H{o} kernel with the asymptotic expansion of the Bergman kernel and in Section \ref{SobolevBergman} we prove Theorem \ref{t:Sobolev}.

\section{The singularities of $K_{D'_\beta}$}\label{singularities}
In this section we describe the behavior of $K_{D'_\beta}$ at the boundary. In particular we observe that, even if the space $H^2(D'_\beta)$ is defined relying on the distinguished boundary $\partial D'_\beta$, the kernel $K_{D'_\beta}$ is singular on the whole topological boundary $bD'_\beta$. The privileged role of the distinguished boundary is echoed in the fact that $K_{D'_\beta}$ has the worst behavior on $\p D'_\beta\times \p D'_\beta$. 

Using the notation of Theorem \ref{t:kernel}, we notice the following facts:
\begin{itemize}

\item[$-$] for $w,z\in D'_\beta$ the terms $K_1$ and $\widetilde{K}_1$ become singular only if
$$
w_2\overline{z_2}\to e^{-\frac{i(w_1-\overline{z_1})+\pi}{2}}.
$$ 
This can happen only if $\log|w_2|^2\to \im(w_1)-\pih$ and $\log|z_2|^2\to \im(z_1)-\pih$. Thus, $K_1$ and $\widetilde{K}_1$ are singular only when both  $w$ and $z$ tend to the right oblique boundary line of the domain in Figure \ref{WormStrip};
\item[$-$]  the terms $K_2$ and $\widetilde{K}_2$ are similar to $K_1$ and $\widetilde{K}_1$ and they are singular on the left oblique boundary line of the domain in Figure \ref{WormStrip};
\item[$-$] the term $\widetilde{K}_3$ is singular when 
 \begin{align*}
&w_2\overline{z}_2\to e^{-\frac{i(w_1-\overline{z_1})+\pi}{2}}\qquad
\text{or}  \qquad w_2\overline{z}_2\to e^{-(\beta-\pih)}.
\end{align*} \ER
Thus, $\widetilde{K}_3$ is singular when both $w$ and $z$ tend either to the lower horizontal  or the right oblique boundary line on of the domain in Figure \ref{WormStrip}. Notice that the worst behavior of the term is  when $w_2\overline{z}_2\to e^{-(\beta-\pih)}$ and $(w_1-\overline{z_1})\to 2(\beta-\pi)$ since the singularities add up. Therefore, $\widetilde{K}_3$ has the worst behavior on the component $E_4$ of $\partial D'_\beta$;
\item[$-$]  the term $\widetilde{K}_4$ is singular when
\begin{align*}
&w_2\overline{z}_2\to e^{-\frac{i(w_1-\overline{z_1})+\pi}{2}}\qquad
\text{or} \qquad \im(w_1-\overline{z_1})\to2\beta.
\end{align*} \ER
Therefore, $\widetilde{K}_4$ is singular when both $w,z$ tend to the right oblique boundary line of the domain of Figure \ref{WormStrip} and it has the worst behavior on $E_1$; 
\item[$-$] the singularities of $\widetilde{K}_5$ are similar to the ones of $\widetilde{K}_4$ and the worst situation is when both $w,z$ tend to $E_3$;
\item[$-$] the singularities of $\widetilde{K_6}$ are similar to the ones of $\widetilde{K}_3$. The term in singular both $w$ and $z$ tend to left oblique or the upper boundary line of the domain in Figure \ref{WormStrip}and the worst situation is when both $w,z$ tend to$E_2$;
\item[$-$]the term $\widetilde{K}_7$ becomes singular when
\begin{align*}
&w_2\overline{z}_2\to e^{\beta-\pih}\qquad
\text{or} \qquad \im(w_1-\overline{z_1})\to2\beta.
\end{align*} \ER
Therefore, the term becomes singular when both $w$ and $z$ tend to the upper boundary line of the domain in Figure \ref{WormStrip} and, like $\widetilde{K}_4$, it has the worst behavior when both $w,z$ tend to $E_1$;
\item[$-$] the last term $\widetilde{K}_8$ is symmetric to $\widetilde{K}_7$. It is singular when $w,z$ tends to the lower boundary line of the domain in Figure \ref{WormStrip} and it has the worst behavior when both $w,z$ tend to $E_3$ .
\end{itemize}

\section{The reproducing kernel of $H^2(D'_\beta)$}\label{kernel}
This section is devoted to the proof of Theorem \ref{t:kernel}. The proof is based on a direct computation of the sum \eqref{KernelSum}. In order to simplify the notation, we define
$$
I_j(\tau)=\int_\dsR\frac{e^{i\tau\xi}}{\ch[\pi\xi]\ch[(2\beta-\pi)(\xi-\frac{j}{2})]}\ d\xi.
$$
Then, we would like to compute the sum
\begin{equation}\label{sum-Ij}
\sum_{j\in\dsZ}I_j(\tau)\lambda^j,
\end{equation}
where the couple $(\tau,\lambda)$ belongs to the set
$$
\clD=\{(\tau,\lambda)\in\dsC^2:\big|\im\tau-\log|\lambda|^2\big|<\pi,\, e^{-(\beta-\pih)}<|\lambda|<e^{\beta-\pih}\}.
$$
Similarly to \cite{MR2448387}, in order to compute \eqref{sum-Ij}, we compute $I_j(\tau)$ by means of the residue theorem and we keep track of the error terms that arise. 

Let us denote by $g_j$ the holomorphic function $g_j:\dsC\to \dsC$
$$
g_j(\zeta):=\frac{e^{i\tau\zeta}}{\ch[\pi\zeta]\ch[(2\beta-\pi)(\zeta-\frac{j}{2})]}.
$$
About the function $g_j$, we have the following result whose easy proof we do not include.
\bprop\label{p:residues}
The function $g_j$ is holomorphic in the plane except at the points

$$
\zeta=i\lt\frac{1}{2}+k\rt,\ k\in\dsZ, \ \ \ \ \ \zeta=i\nu_\beta\lt\ \frac{1}{2}+k\rt+\frac{j}{2},\ k\in\dsZ, 
$$
where $\nu_\beta=\frac{\pi}{2\beta-\pi}$. Moreover

$$
Res\lt g_j,\frac{j}{2}\pm i\frac{\nu_\beta}{2}\rt=\pm\frac{e^{i\tau(\frac{j}{2}\pm i\frac{\nu_\beta}{2})}}{i(2\beta-\pi)\ch\lquad\pi\lt\frac{j}{2}\pm i\frac{\nu_\beta}{2}\rt\rquad}.
$$
\eprop
To compute $I_j(\tau)$ we shall distinguish two cases according to whether $\re\tau\geq0$ or $\re\tau<0$. Let us focus now on the case $\re\tau\geq 0$. We shall use the method of contour integrals. As contour of integration we choose the rectangular box $\gamma_N$ centered on the imaginary axis with corners $N+i0$, $-N+i0$, $N+ih$ and $N-ih$ where $h$ is chosen so that
\begin{equation}\label{h}
\frac{\nu_\beta}{2}<h<\min\lt\frac{1}{2},\frac{3\nu_\beta}{2}\rt.
\end{equation}
We have the following result.

\bprop
Let $\beta>\pi$ and fix $h$ as above. We define

$$
R_j(\tau)=2\pi i \cdot Res\lt g_j, \frac{j}{2}+i\frac{\nu_\beta}{2}\rt,\ \ \ \ \ J_j(\tau)=\int_{\dsR}g_j(\xi+ih)\ d\xi.
$$
Then, for all $j$ in $\dsZ$, 

$$
I_j(\tau)=R_j(\tau)+J_j(\tau).
$$
\eprop
\begin{proof}
The residue theorem guarantees that
\begin{align*}
  \int_{-N}^{N}g_j(\xi)d\xi=R_j(\tau)\!+\!\int_{-N}^{N}g_j(\xi+ih) d\xi-i\int_{0}^h& g_j(N+i\xi) d\xi-i\int_{h}^{0}g_j(-N+i\xi) d\xi.
\end{align*}
It is not hard to prove that the integrals along the vertical sides go to zero and obtain the conclusion.

The proof for $\re\tau<0$ is completely analogous, but we integrate along the similar rectangular box in the bottom half-plane.
\end{proof}

Thus, we have split the sum (\ref{sum-Ij}) into two different sums, namely

$$
\sum_{j\in\dsZ} I_j(\tau)\lambda^j=\sum_{j\in\dsZ} R_j(\tau)\lambda^j+\sum_{j\in\dsZ}J_j(\tau)\lambda^j.
$$
\begin{rmk}\label{positive}
For simplicity of notation, from now on, we restrict ourselves to work with $\re\tau\geq0$. The case $\re \tau<0$ is similarly obtained.
\end{rmk}
\begin{rmk}
 The following equality will have a prominent role in our computations. Let $a,b$  in $\dsR$ such that $a\neq 0$, then
\begin{equation}\label{fond-equality}
\frac{e^{|a|}}{\ch(a+ib)}=2e^{-i\sgn(a)b}\lt1-\frac{e^{-2\sgn(a)(a+ib)}}{1+e^{-2\sgn(a)(a+ib)}}\rt.
\end{equation}
\end{rmk}
\subsection{The sum of the $R_j$'s}
We prove the following proposition.
\bprop
There exists a function $E(\tau,\lambda)$ which is smooth in a neighborhood of $\overline{\clD}$ such that $D^\alpha_{\tau} E=\mathcal{O}(|\re \tau|^\alpha)$ as $|\re\tau|\to+\infty$ and such that
\begin{align}\label{sum-Rj}
\clR(\tau,\lambda)
& =\sum_{j\in\dsZ}R_j(\tau)\lambda^j =\frac{4\nu_\beta}{e^{\frac{\tau\nu_\beta}{2}}}\Bigg\{
\frac{e^{\frac{i\pi\nu_\beta}{2}}}{\lambda
  e^{\frac{i\tau+\pi}{2}}-1}+\frac{e^{-\frac{i\pi\nu_\beta}{2}}\lambda
  e^{\frac{i\tau-\pi}{2}}}{1-\lambda
  e^{\frac{i\tau-\pi}{2}}}+\frac{1}{\ch[i\frac{\nu_\beta}{2}]}+E(\tau,\lambda)\Bigg\} .
\end{align}
 The convergence of the series is uniform on compact subsets of $\clD$.
\eprop
\begin{proof}
From Proposition \ref{p:residues} we have
\begin{align*}
R_j(\tau)&=2\pi i\, \text{Res}\lt
g_j,\frac{j}{2}+i\frac{\nu_\beta}{2}\rt=\frac{2\pi
  e^{i\tau\lt\frac{j}{2}+i\frac{\nu_\beta}{2}\rt}}{(2\beta-\pi)\ch\lquad\pi\lt\frac{j}{2}+i\frac{\nu_\beta}{2}\rt\rquad}=\frac{2\nu_\beta e^{i\tau\lt\frac{j}{2}+i\frac{\nu_\beta}{2}\rt}}{\ch\lquad\pi\lt\frac{j}{2}+i\frac{\nu_\beta}{2}\rt\rquad}.
\end{align*}
Our problem is to compute the sum
\begin{align}\label{sum-Rj-1}
\sum_{j\in\dsZ}\frac{2\nu_\beta
  e^{i\tau\lt\frac{j}{2}+i\frac{\nu_\beta}{2}\rt}}{\ch\lquad\pi\lt\frac{j}{2}+i\frac{\nu_\beta}{2}\rt\rquad}\lambda^j=2\nu_\beta
e^{-\frac{\tau\nu_\beta}{2}}\sum_{j\in\dsZ}\frac{e^{\frac{ij\tau}{2}}\lambda^j}{\ch\lquad\pi\lt\frac{j}{2}+i\frac{\nu_\beta}{2}\rt\rquad} .
\end{align}
If we consider only the sum on the right-hand side of the previous equation, from (\ref{fond-equality}), it follows
\begin{align*}
\sum_{j\in\dsZ}\frac{e^{\frac{ij\tau}{2}}\lambda^j}{\ch\lquad\pi\lt\frac{j}{2}+i\frac{\nu_\beta}{2}\rt\rquad}&=2\sum_{j\in\dsZ}e^{\frac{ij\tau}{2}}\lambda^j\frac{e^{-i\sigma(j)\frac{\pi\nu_\beta}{2}}}{e^{\frac{|j|\pi}{2}}}\!\!\lquad 1-\frac{e^{-2\sigma(j)\lt \frac{j\pi}{2}+i\frac{\pi\nu_\beta}{2}\rt}}{1+e^{-2\sigma(j)\lt \frac{j\pi}{2}+i\frac{\pi\nu_\beta}{2}\rt}}\rquad\\
&=2\lt F-E+G\rt,
\end{align*}
where
\begin{align*}
&\sigma(j)=\sgn(j);\qquad\qquad F=F(\tau,\lambda)=\sum\limits_{j\neq 0}e^{\frac{ij\tau}{2}}\lambda^j e^{-\frac{\pi}{2}\lt |j|+i\nu_\beta \sigma(j)\rt};\\
&G=\frac{1}{\ch\lt i\frac{\pi\nu_\beta}{2}\rt;}\qquad \qquad E=E(\tau,\lambda)= \sum\limits_{j\neq 0}\frac{e^{\frac{ij\tau}{2}}\lambda^j e^{-\frac{\pi}{2}\lt |j|+i\nu_\beta \sigma(j)\rt} e^{-\pi\sigma(j)(j+i\nu_\beta)}}{1+e^{-\pi\sigma(j)(j+i\nu_\beta)}}.
\end{align*}
About $F$, we have
\begin{align}\label{F1}\nonumber
F&=e^{\frac{i\pi\nu_\beta}{2}}\sum_{j<0} e^{j\lt\frac{i\tau}{2}+\frac{\pi}{2}\rt}\lambda^j+e^{-\frac{i\pi\nu_\beta}{2}}\sum_{j>0}e^{j\lt\frac{i\tau}{2}-\frac{\pi}{2}\rt}\lambda^j=\frac{e^{\frac{i\pi\nu_\beta}{2}}}{\lambda e^{\frac{i\tau+\pi}{2}}-1}+\frac{e^{-\frac{i\pi\nu_\beta}{2}}\lambda e^{\frac{i\tau-\pi}{2}}}{1-\lambda e^{\frac{i\tau-\pi}{2}}}
\end{align}
and the convergence of the two series is guaranteed exactly when
$e^{\frac{\textrm{Im}\tau-\pi}{2}}<|\lambda|<e^{\frac{\textrm{Im}\tau+\pi}{2}}$. 

We analyze now the error term $E$. It results
\begin{align*}
E&= e^{\frac{3i\pi\nu_\beta}{2}}\sum_{j<0}\frac{\lambda^j e^{j\frac{i\tau+3\pi}{2}}}{1+e^{\pi(j+i\nu_\beta)}}+e^{-\frac{3i\pi\nu_\beta}{2}}\sum_{j>0}\frac{\lambda^j e^{j\frac{i\tau-3\pi}{2}}}{1+e^{-\pi(j+i\nu_\beta)}}. 
\end{align*}
It is easy to prove that there exists a constant $c>0$ such that $|1+e^{-\pi\sigma(j)+i\nu_\beta}|>c>0$ for every $j$.
Hence the series which define $E$ converge when
$e^{\frac{\textrm{Im}\tau-3\pi}{2}}<|\lambda|<e^{\frac{\textrm{Im}\tau+3\pi}{2}}$
which is an annulus strictly larger than
$e^{\frac{\textrm{Im}\tau-\pi}{2}}<|\lambda|<e^{\frac{\textrm{Im}\tau+\pi}{2}}$. Thus the sums of the two series are smooth and bounded, with all derivatives smooth and bounded, in a neighborhood of $\overline{\clD}$. Moreover, since the two series converge uniformly on a neighborhood of $\overline{\clD}$, we can differentiate term by term with respect to $\tau$ and easily obtain that $D^{\alpha}_\tau E=\clO (|\re\tau|^\alpha)$ as $|\re \tau|\to\infty$. The conclusion follows.
\end{proof}
\subsection{The sum of the $J_j$'s}
It remains to compute $\sum J_j(\tau)\lambda^j$. 
We recall that
$$
J_j(\tau)=\int_{\dsR}\frac{e^{i\tau(\xi+ih)}}{\ch[\pi(\xi+ih)]\ch[(2\beta-\pi)(\xi+ih-j/2)]}\ d\xi.
$$

If we define $\sigma(\xi)=e^{-i\sgn(\xi)\pi h-i\sgn(\xi-\frac{j}{2})(2\beta-\pi)h}$, from equation (\ref{fond-equality}) we obtain 
\begin{equation}\label{decomposition-Jj}
J_j(\tau)=4e^{-\tau h}\lt M_j(\tau)-E^{(1)}_j(\tau)-E^{(2)}_j(\tau)+E^{(3)}_j(\tau)\rt,
\end{equation}
where
\begin{align}
&M_j(\tau)= \int_{\dsR}\sigma(\xi)\frac{e^{i\tau\xi}}{e^{\pi|\xi|+(2\beta-\pi)|\xi-\frac{j}{2}|}}\ d\xi;\label{Mj}\\
&E^{(1)}_j(\tau)=\int_{\dsR}\sigma(\xi)\frac{e^{i\tau\xi}}{e^{\pi|\xi|+(2\beta-\pi)|\xi-\frac{j}{2}|}}\lquad\frac{e^{-2\sgn(\xi)[\pi(\xi+ih)]}}{1+e^{-2\sgn(\xi)[\pi(\xi+ih)]}}\rquad\ d\xi;\label{E1j}\\
&E^{(2)}_j(\tau)=\int_\dsR\sigma(\xi)\frac{e^{i\tau\xi}}{e^{\pi|\xi|+(2\beta-\pi)|\xi-\frac{j}{2}|}}\lquad\frac{e^{-2\sgn(\xi-\frac{j}{2})[(2\beta-\pi)(\xi-\frac{j}{2}+ih)]}}{1\!+e^{-2\sgn(\xi-\frac{j}{2})[(2\beta-\pi)(\xi-\frac{j}{2}+ih)]}}\rquad\ d\xi;\label{E2j}\\  \nonumber
&E^{(3)}_j(\tau)=\int_{\dsR}\sigma(\xi)\frac{e^{i\tau\xi}}{e^{\pi|\xi|+(2\beta-\pi)|\xi-\frac{j}{2}|}}\\
&\hspace{27mm}\times\lquad\frac{e^{-2\sgn(\xi)[\pi(\xi+ih)]}}{1+e^{-2\sgn(\xi)[\pi(\xi+ih)]}}\rquad\lquad\frac{e^{-2\sgn(\xi-\frac{j}{2})[(2\beta-\pi)(\xi-\frac{j}{2}+ih)]}}{1\!+e^{-2\sgn(\xi-\frac{j}{2})[(2\beta-\pi)(\xi-\frac{j}{2}+ih)]}}\rquad\ d\xi.\label{E3j}
\end{align}
Our problem has become the computation of the sum
\begin{equation}
\sum_{j\in\dsZ}J_j(\tau)\lambda^j=4e^{-\tau h}\Bigg[\sum_{j\in\dsZ}M_j(\tau)\lambda^j+\sum_{k=1}^{3}\sum_{j\in\dsZ}E^{(k)}_j(\tau)\lambda^j\Bigg].
\end{equation}
We will use the following scheme to compute the integrals (\ref{Mj})-(\ref{E3j}) . If $j>0$, we choose a positive $\delta$ such that $0<\delta<j/2$ and we consider
\begin{align}\label{j>0}
\int_{\dsR}f&=\Big[\int_{-\infty}^{-\delta}+\int_{-\delta}^{\delta}+\int_{\delta}^{\frac{j}{2}-\delta}+\int_{\frac{j}{2}-\delta}^{\frac{j}{2}+\delta}+\int_{\frac{j}{2}+\delta}^{+\infty}\Big]f=I+\clE_1+\II+\clE_2+\III.
\end{align}
Analogously, for negative $j$'s, we choose a positive $\delta$ such that $j/2<-\delta<0$ and we consider
\begin{align}\label{j<0}
\int_{\dsR}f&= \Big[\int_{-\infty}^{\frac{j}{2}-\delta}+\int_{\frac{j}{2}-\delta}^{\frac{j}{2}+\delta}+\int_{\frac{j}{2}+\delta}^{-\delta}+\int_{-\delta}^{\delta}f+\int_{\delta}^{+\infty}\Big]f=I^*+\clE^*_1+\II^*+\clE^*_2+\III^*.
\end{align}
We remark that the case $j=0$ is somehow special, but it could be treated in a similar way. Also, notice that the decomposition of the integrals above make sense even for $\delta=0$; this choice of $\delta$ will be the case in the computation of the sum of the $M_j$'s as we immediately see.
\bprop
There exist entire functions $\psi_i(\tau,\lambda),i=1,2,3,4$, such that
\begin{align}\label{Mj-final-1}\nonumber
4e^{-\tau h}&\sum_{j\in\dsZ}M_j(\tau)\lambda^j=4e^{-\tau
  h}\Bigg[\frac{e^{2\beta ih}}{i\tau+2\beta}+\frac{-e^{-2\beta
    ih}}{i\tau-2\beta}+\frac{-e^{2\beta ih}}{(i\tau+2\beta)(1-\lambda  e^{\frac{i\tau+\pi}{2}})}\\ \nonumber
    &+\frac{e^{-2\beta
    ih}}{(i\tau-2\beta)(1-\lambda e^{\beta-\pih})}+\frac{\psi_1(\lambda)}{(i\tau+2\beta)(1-\lambda
  e^{-(\beta-\pih)})}+\frac{\psi_2(\tau,\lambda)}{(i\tau-2\beta)(1-\lambda
  e^{\frac{i\tau-\pi}{2}})} \\ 
&\ \  +\frac{\psi_3(\tau,\lambda)}{(1-\lambda e^{\frac{i\tau-\pi}{2}})(1-\lambda e^{-(\beta-\pih)})}+\frac{\psi_4(\tau,\lambda)}{(1-\lambda e^{\beta-\pih})(1-\lambda e^{\frac{i\tau+\pi}{2}})}\Bigg],
\end{align}
where
\begin{align*}
&\psi_1(\lambda)= \lambda e^{2\beta ih}e^{-(\beta-\pih)};\qquad\ \ \ \psi_3(\tau,\lambda)=\lambda e^{-(\beta-\pih)}e^{2(\beta-\pi)ih}\lquad\frac{e^{\frac{i\tau}{2}+\beta-\pi}-1}{i\tau+2\beta-2\pi}\rquad;\\
&\psi_2(\tau,\lambda)=-\lambda e^{-2\beta ih} e^{\frac{i\tau-\pi}{2}};\qquad \psi_4(\tau,\lambda)=\lambda e^{-2(\beta-\pi)ih}e^{\beta-\pih}\lquad \frac{e^{\frac{i\tau}{2}-\beta+\pi}-1}{i\tau-2\beta+2\pi}\rquad.
\end{align*}
\eprop
\begin{proof} First of all, we have to compute each single $M_j(\tau)$. 
In this case we choose $\delta=0$ in (\ref{j>0}) and (\ref{j<0}) so that we do not have the error terms $\clE_1,\clE_2,\clE^*_1$ and $\clE^*_2$.
We begin focusing on the positive $j$'s. Therefore,
\begin{align}
&I=e^{2\beta ih}e^{-(2\beta-\pi)\frac{j}{2}}\int_{-\infty}^{-\delta}e^{(i\tau+2\beta)\xi}\ d\xi;\label{MjI}\\
&\II=e^{2(\beta-\pi)ih}e^{-(2\beta-\pi)\frac{j}{2}}\int_{\delta}^{\frac{j}{2}-\delta}e^{(i\tau+2\beta-2\pi)\xi}\ d\xi;\label{MjII}\\
&\III=e^{-2\beta ih}e^{(2\beta-\pi)\frac{j}{2}}\int_{\frac{j}{2}+\delta}^{+\infty}e^{(i\tau-2\beta)\xi}\ d\xi.\label{MjIII}
\end{align}
Choosing $\delta=0$ we obtain
\begin{align*}
&I=\frac{e^{2\beta ih}}{i\tau+2\beta}e^{-(2\beta-\pi)\frac{j}{2}};\qquad\II=\frac{e^{2(\beta-\pi)ih}}{i\tau+2\beta-2\pi}\lt e^{(i\tau-\pi)\frac{j}{2}}-e^{-(2\beta-\pi)\frac{j}{2}}\rt;\\
&\III=-\frac{e^{-2\beta i h}}{i\tau-2\beta}e^{(i\tau-\pi)\frac{j}{2}}.
\end{align*}
Summing up over the positive $j$'s we obtain
\begin{align}\label{Mj>0}
\sum_{j>0}M_j(\tau)\lambda^j&=\sum_{j>0}(I+\II+\III)\lambda^j\\ \nonumber
&=\frac{e^{2\beta ih}}{i\tau+2\beta}\!\lquad\!\frac{\lambda }{e^{\beta-\pih}-\lambda}\!\rquad\!-\frac{e^{-2\beta ih }}{i\tau-2\beta}\!\lquad\frac{\lambda }{e^{-\frac{i\tau-\pi}{2}}-\lambda}\rquad \\ \nonumber
&\hspace{20mm}+\!\frac{\lambda e^{-(\beta-\pih)}e^{2(\beta-\pi)ih}}{\big(1-\lambda e^{\frac{i\tau-\pi}{2}}\big)\lt1-\lambda e^{-(\beta-\pih)}\rt}\!\lquad\frac{e^{\frac{i\tau}{2}+\beta-\pi}-1}{i\tau+2\beta-2\pi}\rquad.
\end{align}

Notice that we do not have a singularity when $\tau\rightarrow 2\beta-2\pi$.

Analogously, using (\ref{j<0}), we obtain a result for negative $j$'s. Choosing again $\delta=0$, we obtain
\begin{align}\label{Mj<0}
\sum_{j<0}M_j(\tau)\lambda^j&=\sum_{j<0}(I^*+\II^*+\III^*)\lambda^j\\ \nonumber
&=\frac{e^{2\beta ih}}{i\tau+2\beta}\lquad\frac{1}{\lambda e^{\frac{i\tau+\pi}{2}}-1}\rquad-\frac{e^{-2\beta ih}}{i\tau-2\beta}\frac{1}{\lambda e^{\beta-\pih}-1}\\ \nonumber
&\hspace{20mm}+\frac{\lambda e^{-2(\beta-\pi)ih}e^{\beta-\pih}}{\lt\lambda e^{\beta-\pih}-1\rt\lt\lambda e^{\frac{i\tau+\pi}{2}}-1\rt}\!\lquad \frac{e^{\frac{i\tau}{2}-\beta+\pi}-1}{i\tau-2\beta+2\pi}\rquad
 .
\end{align}

It remains to compute $M_0(\tau)$but it is immediate to verify that
\begin{equation}\label{Mj=0}
M_0(\tau)=\frac{e^{2\beta ih}}{i\tau+2\beta}-\frac{e^{-2\beta ih}}{i\tau-2\beta}.
\end{equation}
Simplifying the notation a little bit we obtain \eqref{Mj-final-1} as we wished.
\end{proof}

At this point, we wish to evaluate the sums $\sum_{j\in\dsZ} E^{(k)}_{j}(\tau)\lambda^j$ for $k=1,2,3$. 
Recall that we are still supposing that $\re\tau\geq0$. We first introduce some domains which all are neighborhood of $\clD$, namely
\begin{align}
&\clD'=\big\{ (\tau,\lambda)\in \dsC^2:\left|\im\tau-\log|\lambda|^2\right|<2\pi,\left|\log|\lambda|^2\right|<2\beta-\frac{\pi}{2}\big\};\\
&\clD_{\infty,2\pi}=\lgra(\tau,\lambda)\in\dsC^2:\left|\im\tau-\log|\lambda|^2\right|<2\pi,|\lambda|>0\rgra;\\
&S_{2\beta+\frac{3}{2}\pi}=\big\{\tau\in\dsC:|\im\tau|<2\beta+\frac{3}{2}\pi\big\}.
\end{align}
\bprop\label{E1}
Let us consider
\[
E^{(1)}_j(\tau)=\int_{\dsR}\sigma(\xi)\frac{e^{i\tau\xi}}{e^{\pi|\xi|+(2\beta-\pi)|\xi-\frac{j}{2}|}}\lquad\frac{e^{-2\sgn(\xi)[\pi(\xi+ih)]}}{1+e^{-2\sgn(\xi)[\pi(\xi+ih)]}}\rquad\ d\xi,
\]
where $\sigma(\xi)=e^{-i\sgn(\xi)\pi h-i\sgn(\xi-j/2)(2\beta-\pi)h}$. Then
\begin{equation}\label{E1-tot-bis}
e^{-\tau h}\sum_{j\in\dsZ}E^{(1)}_{j}(\tau)\lambda^j=e^{-\tau h}\Bigg[\frac{\Psi^{(1)}(\tau,\lambda)}{e^{\beta-\pih}-\lambda}+\frac{\Psi^{(2)}(\tau,\lambda)}{e^{-(\beta-\pih)}-\lambda}+\Psi^{(3)}(\tau,\lambda)\Bigg],
\end{equation}
where $\Psi^{(j)}$ are holomorphic functions in a  neighborhood of $\overline{\clD}$, bounded together with all their derivatives as $|\re\tau|\rightarrow \infty$.
\eprop
\begin{proof}
Notice that with our choice of $h$ (see \eqref{h}), it results that $1+e^{-2\sgn(\xi)[\pi(\xi+ih)]}\neq0$ for every $\xi$. We decompose the integral defining $E^{(1)}_j$ as in (\ref{j>0}) and (\ref{j<0}), according to whether $j$ is positive or negative. 

We start analyzing the error terms $\clE_1$ and $\clE_2$. We have
\begin{align*}
&\clE_1= e^{2\beta ih}e^{-(2\beta-\pi)\frac{j}{2}}\int_{-\delta}^{0}e^{(i\tau+2\beta)\xi}\frac{e^{2\pi(\xi+ih)}}{1+e^{2\pi(\xi+ih)}}\ d\xi\\
&\ \ \ \ \ \ \ \ +e^{2(\beta-\pi)ih}e^{-(2\beta-\pi)\frac{j}{2}}\int_{0}^{\delta}e^{(i\tau+2\beta-2\pi)\xi}\frac{e^{-2\pi(\xi+ih)}}{1+e^{-2\pi(\xi+ih)}}\ d\xi,
\end{align*}
from which we deduce
\begin{align}\label{E1-e1}
&\sum_{j>0}\clE_1\lambda^j=\lquad\frac{\lambda e^{-(\beta-\pih)}}{1-\lambda e^{-(\beta-\pih)}}\rquad\\ \nonumber
&\ \times\Bigg[e^{2\beta ih}\int_{-\delta}^{0}e^{(i\tau+2\beta)\xi}\frac{e^{2\pi(\xi+ih)}}{1+e^{2\pi(\xi+ih)}}\ d\xi+e^{2(\beta-\pi)ih}\int_{0}^{\delta}e^{(i\tau+2\beta-2\pi)\xi}\frac{e^{-2\pi(\xi+ih)}}{1+e^{-2\pi(\xi+ih)}}\ d\xi\Bigg].
\end{align}
We conclude that 
\begin{equation}\label{E1-e1-bis}
e^{-\tau h}\sum_{j>0}\clE_1\lambda^j=\lquad\frac{ e^{-(\beta-\pih)}}{1-\lambda e^{-(\beta-\pih)}}\rquad e^{-\tau h}\Psi_{\clE_1}(\tau,\lambda),
\end{equation}
where $\Psi_{\clE_1}(\tau,\lambda)$ is entire and bounded together with all its derivatives as $|\re\tau|\!\rightarrow \infty$ and $\im\tau$ remains bounded.

To deal with $\clE_2$ is a little more complicated since the extremes of integration depend on $j$, but we cannot compute explicitly the integral in order to proceed with the sum in $j$. In fact, we have
\begin{align*}
\clE_2&=e^{2(\beta-\pi)ih}e^{-(2\beta-\pi)\frac{j}{2}}\int_{\frac{j}{2}-\delta}^{\frac{j}{2}}e^{(i\tau+2\beta-2\pi)\xi}\frac{e^{-2\pi(\xi+ih)}}{1+e^{-2\pi(\xi+ih)}}\ d\xi\\
&\ \ \  +e^{-2\beta ih}e^{(2\beta-\pi)\frac{j}{2}}\int_{\frac{j}{2}}^{\frac{j}{2}+\delta}e^{(i\tau-2\beta)\xi}\frac{e^{-2\pi(\xi+ih)}}{1+e^{-2\pi(\xi+ih)}}\ d\xi\\
&=A_j+B_j.
\end{align*}
We notice that 
$$
\frac{e^{-2\pi(\xi+ih)}}{1+e^{-2\pi(\xi+ih)}}= -\sum_{k>0}\lquad-e^{-2\pi(\xi+ih)}\rquad^k,\ \,\xi>0,
$$
where the series converges uniformly on compact sets with bounds uniform in $j>0$. This allows to interchange the order of integration and summation over $k$. Then
\begin{align*}
A_j&=-e^{2(\beta-\pi)ih}e^{-(2\beta-\pi)\frac{j}{2}}\int_{\frac{j}{2}-\delta}^{\frac{j}{2}}e^{(i\tau+2\beta-2\pi)\xi}\sum_{k>0}\lquad-e^{-2\pi(\xi+ih)}\rquad^k\ d\xi\\
&=-e^{2(\beta-\pi)ih}e^{-(2\beta-\pi)\frac{j}{2}}\sum_{k>0}\lquad -e^{-2\pi ih} \rquad^k\int_{\frac{j}{2}-\delta}^{\frac{j}{2}} e^{(i\tau+2\beta-2\pi-2\pi k)\xi}\ d\xi.
\end{align*}
Summing up on positive $j$'s, we obtain 
\begin{align}\label{E1-e2-1}\nonumber
\sum_{j>0}&A_j\lambda^j=-e^{2(\beta-\pi)ih}\sum_{j>0}\lambda^j e^{-(2\beta-\pi)\frac{j}{2}}\sum_{k>0}\!\lquad -e^{-2\pi ih} \rquad^k\!\!\int_{\frac{j}{2}-\delta}^{\frac{j}{2}} e^{(i\tau+2\beta-2\pi-2\pi k)\xi} d\xi\\ \nonumber
&=-e^{2(\beta-\pi)ih}\!\sum_{k>0}\lquad -e^{-2\pi ih}\rquad^k\sum_{j>0}\lambda^j e^{(i\tau-\pi)\frac{j}{2}}\int_{-\delta}^{0} e^{(i\tau+2\beta-2\pi)\xi}e^{-2\pi k\lt\xi+\frac{j}{2}\rt}d\xi\\ \nonumber
&=-e^{2(\beta-\pi) ih}\sum_{k>0}\lquad-e^{-2\pi ih}\rquad^{k}\lquad\frac{\lambda e^{\frac{i\tau-\pi-2\pi k}{2}}}{1-\lambda e^{\frac{i\tau-\pi-2\pi k}{2}}} \rquad\int_{-\delta}^{0}e^{(i\tau+2\beta-2\pi-2\pi k)\xi}d \xi \\ 
&=-e^{2(\beta-\pi) ih}\sum_{k>0}\lquad-e^{-2\pi ih}\rquad^{k}\lquad\frac{\lambda}{e^{\frac{\pi+2\pi k-i\tau}{2}}-\lambda } \rquad h^{(k)}_1(\tau),
\end{align}

where $h^{(k)}_1(\tau)$  is an entire function such that
$$
\left| h^{(k)}_1(\tau)\right|\leq c_{\delta} e^{2\pi k\delta} \lquad\frac{1-e^{-\delta(2\beta-\im\tau)}}{2\beta-\im\tau}\rquad.
$$
Notice that we do not have a singularity when $\im\tau\rightarrow 2\beta$. The convergence of the sum in $j$ is guaranteed when $\left|\lambda e^{\frac{i\tau-\pi-2\pi k}{2}}\right|<1$ and this last condition is satisfied for every positive $k$ when the pair $(\tau,\lambda)$ belongs to $\clD_{\infty,2\pi}$ .

We still have to study $\sum_{j>0}B_j\lambda^j$. We have
$$
B_j=-e^{-2\beta ih}e^{(2\beta-\pi)\frac{j}{2}}\sum_{k>0}\lquad-e^{-2\pi ih}\rquad^k\int_{\frac{j}{2}}^{\frac{j}{2}+\delta}e^{(i\tau-2\beta-2\pi k)\xi}\ d\xi.
$$
Then,
\begin{align}\label{E1-e2-2}\nonumber
\sum_{j>0}&B_j\lambda^j=-e^{-2\beta ih}\sum_{j>0}\lambda^j e^{(2\beta-\pi)\frac{j}{2}}\sum_{k>0}\lquad-e^{-2\pi ih}\rquad^k\int_{\frac{j}{2}}^{\frac{j}{2}+\delta}e^{(i\tau-2\beta-2\pi k)\xi}\ d\xi\\ \nonumber
&=-e^{-2\beta ih}\sum_{k>0}\lquad-e^{-2\pi ih}\rquad^k\sum_{j>0}\lambda^j e^{(i\tau-\pi-2\pi k)\frac{j}{2}}\int_{0}^{\delta}e^{(i\tau-2\beta-2\pi k)\xi}\ d\xi\\
&=-e^{-2\beta ih}\sum_{k>0}\lquad-e^{-2\pi ih}\rquad^k\lquad \frac{\lambda}{e^{\frac{\pi+2\pi k-i\tau}{2}}-\lambda }\rquad h^{(k)}_{2}(\tau).
\end{align}
Here $h^{(k)}_2(\tau)$ is an entire function such $\left|h^{(k)}_2\right|<\lquad\frac{1-e^{-\delta(\im\tau+2\beta)}}{\im\tau+2\beta}\rquad$ and we use the fact that $\left|\lambda e^{\frac{i\tau-\pi-2\pi k}{2}}\right|<1$ for every positive $k$. In conclusion,
\begin{align}\label{E1-e2} 
\sum_{j>0}\clE_2\lambda^j=-\sum_{k>0}[-e^{-2\pi ih}]^k & \lquad\frac{\lambda}{e^{\frac{\pi+2\pi k-i\tau}{2}}-\lambda }\rquad\lquad e^{2(\beta-\pi)ih}h_1^{(k)}(\tau)+e^{-2\beta ih}h_2^{(k)}(\tau)\rquad.
\end{align}
We want to prove that this sum on $k$ converges to a function holomorphic on the domain $\clD_{\infty,2\pi}$. To prove this is enough to assume $\delta<1/2$ and to notice that, for fixed $M>0$, it is possible to select $k_0$ large enough so that for all $k\geq k_0$, when $(\tau,\lambda)\in \clD_{\infty,2\pi}$ with $\im\tau\leq M$ and $|\lambda|\leq e^M$, we have that
$$
\left|e^{\frac{\pi+2\pi k-i\tau}{2}}-\lambda\right|\geq c e^{\pi k}.
$$ 
Thus, the sum in $k$ is uniform on the fixed compact set. Therefore, we have
\begin{equation}\label{E1-e2-bis}
e^{-\tau h}\sum_{j>0}\clE_2\lambda^j=e^{-\tau h}\lquad e^{2(\beta-\pi)ih}\Psi^{(1)}_{\clE_2}(\tau,\lambda)+e^{-2\beta ih}\Psi^{(2)}_{\clE_2}(\tau,\lambda)\rquad,
\end{equation}
where $\Psi^{(i)}_{\clE_2}(\tau,\lambda)$ are holomorphic on $\clD_{\infty,2\pi}$, bounded together with their derivatives as $|\re\tau|\rightarrow \infty$ and $\im\tau$ and $\lambda$ remain bounded.
We took care of the error terms $\clE_1$ and $\clE_2$; using the same strategy, we now study $I,\II$ and $\III$. We have
\begin{align*}
&I=e^{2\beta
ih}e^{-(2\beta-\pi)\frac{j}{2}}\int_{-\infty}^{-\delta}e^{(i\tau+2\beta)\xi}
\frac{e^{2\pi(\xi+ih)}}{1+e^{2\pi(\xi+ih)}}\ d\xi;\\
&\II=e^{2(\beta-\pi)ih}e^{-(2\beta-\pi)\frac{j}{2}}\int_{\delta}^{\frac{j}{2}
-\delta}
e^{(i\tau+2\beta-2\pi)\xi}\frac{e^{-2\pi(\xi+ih)}}{1+e^{-2\pi(\xi+ih)}}\ d\xi\\
&\ \ \ 
=-e^{2(\beta-\pi)ih}e^{-(2\beta-\pi)\frac{j}{2}}\sum_{k>0}\lquad
-e^{-2\pi
ih}\rquad^{k}\int_{\delta}^{\frac{j}{2}-\delta}e^{(i\tau+2\beta-2\pi-2\pi
k)\xi}\ d\xi;\\
&\III=e^{-2\beta ih}
e^{(2\beta-\pi)\frac{j}{2}}\int_{\frac{j}{2}+\delta}^{+\infty}
e^{(i\tau-2\beta)\xi}\frac{e^{-2\pi(\xi+ih)}}{1+e^{-2\pi(\xi+ih)}}\ d\xi\\
&\ \ \ =-e^{-2\beta ih}e^{(2\beta-\pi)\frac{j}{2}}\sum_{k>0}\lquad
-e^{-2\pi
ih}\rquad^{k}\int_{\frac{j}{2}+\delta}^{+\infty}e^{(i\tau-2\beta-2\pi k)\xi}\
d\xi.
\end{align*}
Then, if $\left|\lambda e^{-(\beta-\pih)}\right|<1$, we obtain
\begin{align}\label{E1-I}\nonumber
\sum_{j>0} I\lambda_j&= e^{2\beta ih}\lquad \frac{\lambda
e^{-\frac{2\beta-\pi}{2}}}{1-\lambda
e^{-\frac{2\beta-\pi}{2}}}\rquad\lquad\int_{-\infty}^{-\delta}e^{
(i\tau+2\beta)\xi}
\frac{e^{2\pi(\xi+ih)}}{1+e^{2\pi(\xi+ih)}}\ d\xi\rquad\\ \nonumber
&=e^{2\beta ih}\lquad \frac{\lambda}{e^{(\beta-\pih)}-\lambda
}\rquad\sum_{k>0}\lquad -e^{2\pi ih}\rquad^{k}\lquad \frac{e^{-\delta(i\tau+2\beta+2\pi k)}}{i\tau+2\beta+2\pi k}\rquad\\
&= e^{2\beta ih}\lquad\frac{\Psi_{I}(\tau,\lambda)}{e^{(\beta-\pih)}-\lambda}\rquad
\end{align}
where $\Psi_{I}$ is holomorphic in a neighborhood of $\overline{\clD}$. In fact, let us consider 
\begin{equation}
\label{D'}\clD'=\lgra (\tau,\lambda)\in\dsC^2:\left| \im\tau-\log|\lambda|^2\right|<2\pi,\left|\log|\lambda|^2\right|<2\beta-\frac{\pi}{2}\rgra.
\end{equation}
Then, on this set, the holomorphicity of $\Psi_{I}(\tau,\cdot)$ as a function of $\lambda$ for every $\tau$ fixed is obvious, whereas for the holomorphicity of $\Psi_{I}(\cdot,\lambda)$ we have 
\begin{align*}
\left|\lquad -e^{2\pi ih}\rquad^{k}\frac{e^{-\delta(i\tau+2\beta+2\pi k)}}{i\tau+2\beta+2\pi k} \right|&\leq e^{\delta(\im\tau-2\beta)}\frac{e^{-2\pi k\delta}}{\lquad (\re\tau)^2+(2\beta+2\pi k-\im\tau)^2\rquad^{\frac{1}{2}}}\\
&\leq C e^{\delta(\im\tau-2\beta)}e^{-2\pi k \delta}.
\end{align*}
This is true because $\im\tau<2\beta+\frac{3}{2}\pi<2\beta+2\pi k$ for all $k\geq1$. Thus, we have uniform convergence and we can conclude that
\begin{equation}
e^{-\tau h}\sum_{j>0}I\lambda ^j=e^{-\tau h} e^{2\beta ih}\lquad\frac{\Psi_{I}(\tau,\lambda)}{e^{(\beta-\pih)}-\lambda}\rquad,  
\end{equation}
where $\Psi_I(\tau,\lambda)$ is holomorphic in $\clD'$.

About $\II$, notice that
\begin{align*}
\II\!=-e^{2(\beta-\pi)ih-(2\beta-\pi)\frac{j}{2}}\sum_{k>0}\!\lquad-e^{-2\pi ih}\rquad^k\! &e^{(i\tau+2\beta-2\pi-2\pi k)\delta}\!\lgra\frac{e^{(i\tau+2\beta-2\pi-2\pi k)(\frac{j}{2}-2\delta)}-1}{i\tau+2\beta-2\pi-2\pi k}\!\rgra
\end{align*}
and we do not have a singularity when $i\tau+2\beta-2\pi-2\pi k$ tends to $0$.

If we suppose again $\left|\lambda
e^{\frac{i\tau-\pi-2\pi k}{2}}\right|<1$, we get
\begin{align}\label{E1-II} \nonumber
\sum_{j>0}\II\lambda^j=\frac{-e^{2(\beta-\pi)ih}}{e^{\beta-\pih}-\lambda}&\sum_{k>0}\frac{\lambda\lquad-e^{-2\pi
ih}\rquad^k  e^{(i\tau+2\beta-2\pi-2\pi k)\delta}}{i\tau+2\beta-2\pi-2\pi k}\\ \nonumber
&\qquad\times \Bigg[\frac{e^{-2(i\tau+2\beta-2\pi-2\pi k)\delta}(e^{\beta-\pih}-\lambda)-(e^{\frac{2\pi k+\pi+i\tau}{2}}-\lambda)}{e^{\frac{2\pi k+\pi-i\tau}{2}}-\lambda}\Bigg].
\end{align}

We want to say something more about the sum in $k$. For each $M>0$ we can select $k_0$ such that for every $k>k_0$ and $(\tau,\lambda )$ with $|\im\tau|<M$ and $|\lambda|<e^M$, we have
$$
\left|e^{\frac{2\pi k+2\pi-i\tau}{2}}-\lambda\right|\geq e^{\frac{2\pi(k+1)-M}{2}}-e^M\geq\frac{1}{2}e^{\pi k},
$$
so that the series in $k$ converges uniformly on the fixed compact set and we conclude that
\begin{equation}\label{E1-II-bis}
e^{-\tau h}\sum_{j>0}\II\lambda^j= -e^{2(\beta-\pi)ih}e^{-\tau h}\lquad\frac{\Psi_{\II}(\tau,\lambda)}{e^{\beta-\pih}-\lambda}\rquad
\end{equation}
where $\Psi_{\II}$ is a function holomorphic in $\clD_{\infty,2\pi}$.

Finally, for $(\tau,\lambda)$ in $\clD'$, it holds for every positive $k$ that $\left|\lambda e^{\frac{i\tau-\pi-2\pi k}{2}}\right|<1$, so the sum of the $III$'s results to be
\begin{align}\label{E1-III}
\sum_{j>0}\III\lambda^{j}&= e^{-2\beta ih}\sum_{k>0}\lquad-e^{-2\pi
ih}\rquad^k\lquad\frac{e^{\delta(i\tau-2\beta-2\pi k)}}{i\tau-2\beta-2\pi
k}\rquad\lquad \frac{\lambda e^{\frac{i\tau-\pi-2\pi k}{2}}}{1-\lambda
e^{\frac{i\tau-\pi-2\pi k}{2}}}\rquad.
\end{align}
We have to discuss the sum over $k$. We notice that
\begin{align*}
\left| \frac{\lambda e^{\frac{i\tau-\pi-2\pi k}{2}}}{1-\lambda
e^{\frac{i\tau-\pi-2\pi k}{2}}}\right|&\leq \frac{|\lambda| e^{\frac{-\im\tau-\pi-2\pi k}{2}}}{e^{\frac{-\im\tau-\pi-2\pi k}{2}}\left|\im\lquad \lambda e^{\frac{i \re\tau}{2}}\rquad\right|}=\frac{|\lambda| e^{\frac{-\im\tau-\pi}{2}}}{e^{\frac{-\im\tau-\pi}{2}}\left|\im\lquad \lambda e^{\frac{i \re\tau}{2}}\rquad\right|}.
\end{align*}
So $\left|\frac{\lambda e^{-\frac{i\tau-\pi-2\pi k}{2}}}{1-\lambda e^{-\frac{i\tau-\pi-2\pi k}{2}}}\right|$ is uniformly bounded in k. Moreover, since $\im\tau>-2\beta-\frac{3}{2}\pi>-2\beta-2\pi k$ for every positive $k$, the series $\sum\limits_{k>0}\lquad -e^{-2\pi ih} \rquad^{k}\lquad \frac{e^{\delta(i\tau-2\beta-2\pi k)}}{i\tau-2\beta-2\pi k}\rquad$ converges uniformly in $\tau$ and we conclude that
\begin{equation}\label{E1-III-bis}
e^{-\tau h}\sum_{j>0}\III\lambda^j= e^{-2\beta ih}e^{-\tau h}\Psi_{\III}(\tau,\lambda),
\end{equation}
where $\Psi_{\III}(\tau,\lambda)$ is holomorphic in $D'$. We remark that the functions $\Psi_I,\Psi_{II}$ and $\Psi_{\III}$ are bounded together with all their derivative as $|\re\tau|\rightarrow \infty$ and $\im\tau$ and $\lambda$ remain bounded.

We now focus on the sum over negative $j$'s. Again, we start analyzing the error terms $\clE_1^*$ and $\clE_2^*$. We have
\begin{align*}
\clE_1^*&= e^{2\beta ih}e^{-(2\beta-\pi)\frac{j}{2}}\int_{\frac{j}{2}-\delta}^{\frac{j}{2}} \frac{e^{(i\tau+2\beta)\xi}e^{2\pi(\xi+ih)}}{1+e^{2\pi(\xi+ih)}}\ d\xi\\
&\hspace{38mm}+e^{-2(\beta-\pi)ih}e^{(2\beta-\pi)\frac{j}{2}}\int_{\frac{j}{2}}^{\frac{j}{2}+\delta} \frac{e^{(i\tau-2\beta+2\pi)\xi}e^{2\pi(\xi+ih)}}{1+e^{2\pi (\xi+ih)}}d \xi\\
&=C_j+D_j.
\end{align*}
If we suppose $(\tau,\lambda)\in\clD_{\infty,2\pi}$, then $\left|\lambda e^{\frac{i\tau+\pi+2\pi k}{2}}\right|>1$ for every positive $k$, so  we obtain
$$
\sum_{j<0}C_j\lambda^j=-e^{2\beta ih}\sum_{k>0}\frac{\lquad -e^{2\pi ih}\rquad^k}{\lambda e^{\frac{i\tau+\pi+2\pi k}{2}}-1}\int_{-\delta}^{0}e^{(i\tau+2\beta+2\pi k)\xi}\ d\xi.
$$
Now, since $(\tau,\lambda)$ is in $D_{\infty,2\pi}$, it holds $|\lambda|>e^{\frac{\im\tau}{2}-\pi}>e^{\frac{\im\tau}{2}-\frac{3}{2}\pi}\geq e^{\frac{\im\tau}{2}-\pih-\pi k}$ for every positive $k$, so
\begin{align*}
\left|\lambda e^{\frac{i\tau+\pi+2\pi k}{2}-1}\right|&=e^{\frac{\pi+2\pi k-\im\tau}{2}}\left|\lambda-e^{-\frac{i\tau+\pi+2\pi k}{2}}\right|
\geq e^{\frac{\pi+2\pi k-\im\tau}{2}}\lt e^{\frac{\im\tau}{2}-\pi}-e^{\frac{\im\tau-3\pi}{2}}\rt
>0.
\end{align*}
Using this estimates and the fact that $\left|\int_{-\delta}^{0}e^{(i\tau+2\beta+2\pi k)\xi}d \xi\right|$ is uniformly bounded in $k$, we can conclude that
$$
\sum_{j<0}C_j\lambda^j=e^{2\beta ih}\Psi_{\clE_1^*}^{(1)}(\tau,\lambda),
$$
where $\Psi_{\clE_1^*}^{(1)}$ is holomorphic in $\clD_{\infty,2\pi}$.
Similarly,
$$
\sum_{j<0}D_j\lambda^j=-e^{-2(\beta-\pi)ih}\sum_{k>0}\frac{\lquad -e^{2\pi ih}\rquad^k}{\lambda e^{\frac{i\tau+\pi+2\pi k}{2}}-1}\int_{0}^\delta e^{(i\tau-2\beta+2\pi+2\pi k)\xi}\ d\xi,
$$
Arguing as before, if in addition we suppose $\delta<\frac{1}{2}$, we obtain
$$
\sum_{j<0}D_j\lambda^j=e^{-2(\beta-\pi)ih}\Psi_{\clE^*_1}^{(2)}(\tau,\lambda),
$$
where $\Psi_{\clE^*_1}^{(2)}$ is holomorphic in $\clD_{\infty,2\pi}$.

In conclusion we obtain
\begin{equation}\label{E1-e1*-bis}
e^{-\tau h}\sum_{j<0}\clE_{1}^*\lambda^j= e^{-\tau h}\lquad e^{2\beta ih}\Psi^{(1)}_{\clE_1^*}(\tau,\lambda)+e^{-2(\beta-\pi)ih}\Psi^{(2)}_{\clE_1^*}(\tau,\lambda)\rquad,
\end{equation}
where $\Psi^{(i)}_{\clE_1^*}(\tau,\lambda)$ are holomorphic on $D_{\infty,2\pi}$.

For $\clE^*_2$ it results
\begin{align*}
\clE^*_2&= e^{-2(\beta-\pi)ih}e^{(2\beta-\pi)\frac{j}{2}}\int_{-\delta}^{0}\frac{e^{(i\tau-2\beta+2\pi)\xi}e^{2\pi(\xi+ih)}}{1+e^{2\pi(\xi+ih)}}\ d\xi\\
&\hspace{43mm}+e^{-2\beta ih}e^{(2\beta-\pi)\frac{j}{2}}\int_{0}^{\delta}\frac{ e^{(i\tau-2\beta)\xi}e^{-2\pi(\xi+ih)}}{1+e^{-2\pi(\xi+ih)}}\ d\xi.
\end{align*}
It follows, for $\left|\lambda e^{\beta-\pih}\right|>1$,
\begin{align}\label{E1-e2*}
&\sum_{j<0}\clE^*_2\lambda^j= \lquad\frac{1}{\lambda e^{\frac{2\beta-\pi}{2}}-1}\rquad\\
&\hspace{5mm}\times\Bigg[ e^{-2(\beta-\pi)ih}\int_{-\delta}^{0}\frac{e^{(i\tau-2\beta+2\pi)\xi}e^{2\pi(\xi+ih)}}{1+e^{2\pi(\xi+ih)}}\ d\xi+e^{-2\beta ih}\int_{0}^{\delta} \frac{e^{(i\tau-2\beta)\xi}e^{-2\pi(\xi+ih)}}{1+e^{-2\pi(\xi+ih)}}\ d\xi\Bigg].
\end{align}
We conclude that
\begin{equation}\label{E1-e2*-bis}
e^{-\tau h}\sum_{j<0}\clE^*_2\lambda^j=e^{-\tau h}\lquad\frac{\Psi_{\clE_2^*}(\tau)}{\lambda e^{\beta-\frac{\pi}{2}}-1}\rquad,
\end{equation}
where $\Psi_{\clE_2^*}$ is entire.

Let us see what happens with the principal terms $I^*,\II^*$ and $\III^*$. We have
\begin{align*}
&I^* =-e^{2\beta ih} e^{-(2\beta-\pi)\frac{j}{2}}\int_{-\infty}^{\frac{j}{2}-\delta}e^{(i\tau+2\beta)\xi}\sum_{k>0}\lquad-e^{2\pi(\xi+ih)}\rquad^{k}\ d\xi;\\
&\II^*= - e^{-2(\beta-\pi)ih} e^{(2\beta-\pi)\frac{j}{2}}\int_{\frac{j}{2}+\delta}^{-\delta}e^{(i\tau+2\pi-2\beta)\xi}\sum_{k>0}\!\lquad-e^{2\pi(\xi+ih)}\rquad^{k}d\xi;\\
&\III^*= e^{-2\beta ih} e^{(2\beta-\pi)\frac{j}{2}}\int_{\delta}^{+\infty} e^{(i\tau-2\beta)\xi}\frac{e^{-2\pi(\xi+ih)}}{1+e^{-2\pi(\xi+ih)}}\ d\xi. 
\end{align*}
Then, if we suppose $(\tau,\lambda)$ in $\clD'$, it holds $\left|\lambda e^{\frac{i\tau+\pi+2\pi k}{2}}\right|>1$ for every positive $k$, so
\begin{align}\label{E1-I*}
&\sum_{j<0}I^*\lambda^j= -e^{2\beta ih}\sum_{k>0}\lquad -e^{2\pi ih}\rquad^k\frac{e^{-\delta(i\tau+2\beta+2\pi k)}}{i\tau+2\beta+2\pi k}\lquad\frac{1}{\lambda e^{\frac{i\tau+\pi+2\pi k}{2}}-1}\rquad;\\  \label{E1-II*}
&\sum_{j<0}\II^*\lambda ^j = \sum_{k>0}\frac{-e^{2(\beta-\pi)ih}\lquad-e^{2\pi ih}\rquad^k}{i\tau-2\beta+2\pi+2\pi k}\lquad \frac{e^{-\delta(i\tau-2\beta+2\pi+2\pi k)}}{\lambda e^{\frac{2\beta-\pi}{2}}-1}-\frac{e^{\delta(i\tau-2\beta+2\pi+2\pi k)}}{\lambda e^{\frac{i\tau+\pi+2\pi k}{2}}-1}\rquad;\\ \label{E1-III*}
&\sum_{j<0}\III^*\lambda^j= \frac{e^{-2\beta ih}}{\lambda e^{\frac{2\beta-\pi}{2}}-1}\sum_{k>0}\lquad -e^{-2\pi ih}\rquad^{k}\frac{e^{\delta(i\tau-2\beta-2\pi k)}}{i\tau-2\beta-2\pi k}.
\end{align}
Notice that $(\tau,\lambda)\in\clD'$ implies that $i\tau+2\beta+2\pi k\neq 0$ for every positive $k$, therefore
\begin{equation}\label{E1-I*-bis}
e^{-\tau h}\sum_{j<0}I^*\lambda^j=-e^{2\beta ih}e^{-\tau h}\Psi_{I^*}(\tau,\lambda),
\end{equation}
where $\Psi_{I^*}(\tau,\lambda)$ is holomorphic in $D'$. Analogously, for $\sum_{j<0}\III^*\lambda^{j}$ we have
\begin{align*}
\frac{e^{\delta(i\tau-2\beta-2\pi k)}}{i\tau-2\beta-2\pi l}&\leq \frac{e^{\delta(-\im\tau-2\beta)}e^{-2\pi\delta k}}{\lquad (\re\tau)^2-(\im\tau+2\beta+2\pi k)^2\rquad^{\frac{1}{2}}}\leq C e^{-2\pi\delta k},
\end{align*}
where the last inequality is true since $\im\tau>-2\beta-\frac{3}{2}\pi>-2\beta-2\pi k$ for every $k\geq 1$.
So
\begin{equation}\label{E1-III*-bis}
e^{-\tau h}\sum_{j<0}\III^*\lambda^j=e^{2\beta ih}\lquad \frac{e^{-\tau h}\Psi_{\III^*}(\tau)}{\lambda e^{\beta-\frac{\pi}{2}}-1}\rquad,
\end{equation}
where $\Psi_{\III^*}(\tau)$ is holomorphic in $S_{2\beta+\frac{3}{2}\pi}$. About (\ref{E1-II*}) we notice that we do not have a singularity when $i\tau-2\beta+2\pi+2\pi k\rightarrow 0$. Then, for every $M>0$ and $(\tau,\lambda)\in D_{\infty,2\pi}$ such that $e^M>|\lambda|>e^{-M}$ and $|\im\tau|<M$ we can choose $k_0$ such that for every $k>k_0$ it holds
$$
\left| \lambda e^{\frac{i\tau+\pi+2\pi k}{2}}-1\right|\geq e^{-M}e^{\frac{-M+\pi+2\pi k}{2}}-1\geq \frac{1}{2}e^{\pi k}.
$$
Using this last estimate we can conclude that
\begin{equation}\label{E1-II*-bis}
e^{-\tau h}\sum_{j<0}\II^*\lambda^j=-e^{2(\beta-\pi)ih}\lquad
\frac{e^{-\tau h}\Psi_{II^*}(\tau,\lambda)}{\lambda
  e^{\beta-\pih}-1}\rquad 
\end{equation}
where $\Psi_{II^*}(\tau,\lambda)$ is holomorphic on $D_{2\pi,\infty}$.
It remains to study the term $E_0^{(1)}(\tau)$. Using some of the same arguments we used before it is possible to conclude that $E_0^{(1)}(\tau)$ is a holomorphic function in $S_{2\beta+\frac{3}{2}\pi}$. We remark that all the functions $\Psi_*$ are bounded together with all their derivatives as $|\re\tau|\rightarrow \infty$ and $\im\tau$ and $\lambda$ remain bounded. Finally, we deduce \eqref{E1-tot-bis} from equations (\ref{E1-e1-bis}), (\ref{E1-e2-bis}), \eqref{E1-I}, \eqref{E1-II-bis}, \eqref{E1-III-bis}, \eqref{E1-e1*-bis}, \eqref{E1-e2*-bis},  \eqref{E1-I*-bis}, \eqref{E1-III*-bis} and \eqref{E1-II*-bis}. 
\end{proof}
It remains to compute the sums $\sum_{j\in\dsZ}E^{(k)}_j(\tau)\lambda^j, k=2,3$. In order to keep the length of this work contained, we do not include the proofs since are similar to the one of Proposition \ref{E1} and we only state the results.
We refer the reader to \cite{MonThesis} for full details. 

We recall that 
$$
E^{(2)}_{j}(\tau)=\int_{\dsR}\sigma(\xi) \frac{e^{i\tau \xi}}{e^{\pi|\xi|+(2\beta-\pi)|\xi-\frac{j}{2}|}}\frac{e^{-2\sgn\lt\xi-\frac{j}{2}\rt\lquad(2\beta-\pi)(\xi-\frac{j}{2}+ih)\rquad}}{1+e^{-2\sgn\lt\xi-\frac{j}{2}\rt\lquad(2\beta-\pi)(\xi-\frac{j}{2}+ih)\rquad}}d \xi
$$
and 
\begin{align*}
E^{(3)}_{j}(\tau)=\int_{\dsR}\sigma(\xi) \frac{e^{i\tau
    \xi}}{e^{\pi|\xi|+(2\beta-\pi)|\xi-\frac{j}{2}|}}&\frac{e^{-2\sgn(\xi)\pi(\xi+ih)}}{1+e^{-2\sgn(\xi)\pi(\xi+ih)}}\\
    &\hspace{7mm}\times\frac{e^{-2\sgn\lt\xi-\frac{j}{2}\rt\lquad(2\beta-\pi)(\xi-\frac{j}{2}+ih)\rquad}}{1+e^{-2\sgn\lt\xi-\frac{j}{2}\rt\lquad(2\beta-\pi)(\xi-\frac{j}{2}+ih)\rquad}}d \xi,
\end{align*}
where 
$$
\sigma(\xi)=e^{-i\sgn(\xi)\pi h}e^{-i\sgn(\xi-\frac{j}{2})(2\beta-\pi)h}.
$$
Then, it holds the following proposition.
\bprop \label{E2}
There exist  $\Phi^{(k)}$, $k=1,2,3$, and $\Theta$ holomorphic functions in a  neighborhood of $\overline{\clD}$, bounded together with all their derivatives as $|\re\tau|\rightarrow \infty$ such that 
\begin{equation}\label{E2-tot-bis}
e^{-\tau h}\sum_{j\in\dsZ}E^{(2)}_{j}(\tau)\lambda^j=e^{-\tau h}\lquad\frac{\Phi^{(1)}(\tau,\lambda)}{1-\lambda e^{\frac{i\tau-\pi}{2}}}+\frac{\Phi^{(2)}(\tau,\lambda)}{\lambda e^{\frac{i\tau+\pi}{2}}-1}+\Phi^{(3)}(\tau,\lambda)\rquad,
\end{equation}
and
\begin{equation}\label{E3-tot-bis}
e^{-\tau h}\sum_{j\in\dsZ}E^{(3)}_{j}\lambda^j=e^{-\tau h}\Theta(\tau,\lambda),
\end{equation}  
\eprop
We finally have all the ingredients to prove Theorem \ref{t:kernel}. 
\begin{proof}[Proof of Theorem \ref{t:kernel}]
Recalling that $\tau=w_1-\overline{z_1}$ and $\lambda=w_2\overline{z_2}$ and assuming that $\re (w_1-\overline{z_1})\geq0$ we deduce \eqref{expansion} from \eqref{sum-Ij}, \eqref{sum-Rj}, \eqref{Mj-final-1}, \eqref{E1-tot-bis}, \eqref{E2-tot-bis} and \eqref{E3-tot-bis}. We point out that, in order to obtain a shorter formula for \eqref{expansion}, we grouped together the first and the fifth main terms of \eqref{Mj-final-1} with the first main term of \eqref{E1-tot-bis}, whereas we grouped the second and the fourth term of \eqref{Mj-final-1} with the second term of \eqref{E1-tot-bis}. 
The conclusion for $\re (w_1-\overline{z_1})<0$ is similarly obtained.
\end{proof}
\section{Comparison with the Bergman kernel}\label{comparison}
In this section we prove Theorem \ref{t:comparison}. The conclusion follows by an explicit computation of the derivatives of the kernel $K_{D'_\beta}(w,z)$.
\begin{proof}
Let $h$ be fixed such that both \eqref{expansion} and \eqref{ExpansionBergman} hold. We prove the theorem explicitly only for $\frac{\p}{\p w_1} K_{D'_\beta}$; the proof for the others cases is similarly obtained. 

 It is not hard to verify that there exist functions $\tilde{\rho}_1,\tilde{\rho}_2$ and $\mathcal{E}$ with the same properties of $\rho_1, \rho_2$ and $E$ in Theorem \ref{t:kernel} and functions $\widetilde{G}_k, k=1,\ldots,8$ and $\widetilde{\mathcal{E}}$ with the same properties of the functions $F_k, k=1,\ldots,8$ in Theorem \ref{KP} such that 
\bequa\label{DerivativeKernel}
\frac{\p}{\p w_1}K_{D'_\beta}=e^{-\sgn[\re(w_1-\overline{z_1})]\frac{(w_1-\overline{z_1})\nu_\beta}{2}}\mathcal{K}(w,z)+e^{-\sgn[\re(w_1-\overline{z_1})](w_1-\overline{z_1}) h}\widetilde{\mathcal{K}}(w,z)
\eequa
where
\begin{align*}
  \mathcal{K}(w,z)=\frac{\widetilde{\rho}_1(w,z)}{\big[e^{\frac{\pi-i(w_1-\overline{z_1})}{2}}-w_2\overline{z_2}\big]^2}+\frac{\widetilde{\rho}_2(w,z)}{\big[e^{-\frac{\pi+i(w_1-\overline{z_1})}{2}}-w_2\overline{z_2}\big]^2}+\mathcal{E}(w,z)
\end{align*}
and
\begin{align*}
 &\widetilde{\mathcal K}(w,z)\\
 &=\frac{\widetilde G_1(w,z)}{\big[i(w_1-\overline{z_1})+2\beta\big]^2\big[e^{\beta-\pih}-w_2\overline{z_2}\big]}+\frac{\widetilde G_2(w,z)}{\big[e^{-\frac{i(w_1-\overline{z_1})+\pi}{2}}-w_2\overline{z_2}\big]^2\big[i(w_1-\overline{z_1})+2\beta\big]^2}\\
  &\ \  +\frac{\widetilde G_3(w,z)}{\big[e^{\frac{\pi-i(w_1-\overline{z_1})}{2}}-w_2\overline{z_2}\big]^2\big[e^{\beta-\pih}-w_2\overline{z_2}\big]}+\frac{\widetilde G_4(w,z)}{\big[e^{\frac{\pi-i(w_1-\overline{z_1})}{2}}-w_2\overline{z_2}\big]^2\big[i(w_1-\overline{z_1})-2\beta\big]^2} \\
  &\ \ +\frac{\widetilde G_5(w,z)}{\big[i(w_1-\overline{z_1})-2\beta\big]^2\big[e^{-(\beta-\pih)}-w_2\overline{z_2}\big]}+\frac{\widetilde G_6(w,z)}{\big[e^{-\frac{i(w_1-\overline{z_1})+\pi}{2}}-w_2\overline{z_2}\big]^2\big[e^{-(\beta-\pih)}-w_2\overline{z_2}\big]}\\
  &\ \ +\frac{\widetilde G_7(w,z)}{\big[e^{\frac{\pi-i(w_1-\overline{z_1})}{2}}-w_2\overline{z_2}\big]^2}+\frac{\widetilde G_8(w,z)}{\big[e^{-\frac{i(w_1-\overline{z_1})+\pi}{2}}-w_2\overline{z_2}\big]^2}+\widetilde{\clE}(w,z).
\end{align*}
Then, the conclusion follows comparing \eqref{DerivativeKernel} and \eqref{ExpansionBergman}.

\end{proof}
\section{Sobolev regularity of the Bergman projection}\label{SobolevBergman}
In this section we prove Theorem \ref{t:Sobolev}. The crucial fact is that the operator $P_{D'_\beta}$ commutes with the differential operators and we can conclude using Theorem \ref{KP2}. 
\begin{proof}
Let us focus for a moment on the term
$$
B_1(w,z)=\frac{\varphi_1(w,z)}{\big[e^{\frac{\pi-i(w_1-\overline{z_1})}{2}}-w_2\overline{z_2}\big]^2}
$$
of the expansion \eqref{ExpansionBergman}. The key observation is the fact that differentiating $B_1(w,z)$ with respect to $w_1$ or $w_2$ is, in some sense, the same as differentiating with respect to $\overline{z_1}$ or $\overline{z_2}$ respectively. In fact, both $D_{w_1}B(w,z)$ and $D_{\overline{z_1}}B(w,z)$ are of the form 
\[
D_{w_1}B_1(w,z)=\frac{\delta_1(w,z)}{\big[e^{\frac{\pi-i(w_1-\overline{z_1})}{2}}-w_2\overline{z_2}\big]^4}
\]
and
\[
D_{\overline{z_1}}B_1(w,z)=\frac{\gamma_1(w,z)}{\big[e^{\frac{\pi-i(w_1-\overline{z_1})}{2}}-w_2\overline{z_2}\big]^4}
\]
where $\delta_1$ and $\gamma_1$ are functions with the same properties of $\varphi_1$. The same is true if we consider higher order derivatives or we differentiate with respect to $w_2$ and $\overline{z_2}$. This property holds for each term of the asymptotic expansion of the kernel $B_{D'_\beta}(w,z)$, therefore, with an abuse of language, we can conclude that, for $k$ fixed integer, $D^{k}_{w}B_{D'_\beta}$ and $D^{k}_{\overline{z}}B_{D'_\beta}$ have the same asymptotic expansion. This expansion is of the form \eqref{ExpansionBergman} and we write $D^{k}_w B_{D'_\beta}\thickapprox D^{k}_{\overline{z}} B_{D'_\beta}$. 

Since $D'_\beta$ is a Lipschitz domain, it satisfies the segment condition. Hence, the set of restriction to $D'_\beta$ of functions in $\clC^{\infty}_0(\dsR^4)$ is dense in $W^{k,p}(D'_\beta)$ (see, e.g., \cite[Theorem $3.22$]{MR2424078}).
Thus, let $f$ be a function  in $\clC^\infty_0(\dsR^4)$ and let $k$ and $l$ be fixed integers.

\begin{align*}
D^{k}_{z_1} D^{l}_{z_2}P_{D'_\beta}f(z_1,z_2)&=D^{k}_{z_1} D^{l}_{z_2} \int_{D'_\beta}f(w)B(z,w)\ dA(w)\\
&=\int_{D'_\beta}f(w)D^{k}_{z_1} D^{l}_{z_2} B(z,w)\ dA(w)\\
&\thickapprox \int_{D'_\beta}f(w)D^{k}_{\overline{w_1}} D^{l}_{\overline{w_2}}B(z,w)\ dA(w)\\
&=-\int_{D'_\beta}\big[D^{k}_{\overline{w_1}} D^{l}_{\overline{w_2}}f(w)\big] B(z,w) dA(w)\\
&=P_{D'_\beta}[D^{k}_{\overline{w_1}} D^{l}_{\overline{w_2}} f](z_1,z_2).
\end{align*}
The proof is easily concluded using this relationship and Theorem \ref{KP2}.
\end{proof}

\bibliographystyle{plain}
\bibliography{bibWormStrip}

\begin{thebibliography}{10}

\bibitem{MR2424078}
Robert~A. Adams and John J.~F. Fournier.
\newblock {\em Sobolev spaces}, volume 140 of {\em Pure and Applied Mathematics
  (Amsterdam)}.
\newblock Elsevier/Academic Press, Amsterdam, second edition, 2003.

\bibitem{MR1149863}
D.~E. Barrett.
\newblock Behavior of the {B}ergman projection on the {D}iederich-{F}orn\ae ss
  worm.
\newblock {\em Acta Math.}, 168(1-2):1--10, 1992.

\bibitem{MR3145917}
D.~E. Barrett and L.~Lee.
\newblock On the {S}zeg{\H o} metric.
\newblock {\em J. Geom. Anal.}, 24(1):104--117, 2014.

\bibitem{MR2904008}
D.~E. Barrett and S.~{\c{S}}ahuto{\u{g}}lu.
\newblock Irregularity of the {B}ergman projection on worm domains in {$\Bbb
  C^n$}.
\newblock {\em Michigan Math. J.}, 61(1):187--198, 2012.

\bibitem{MR773403}
H.~P. Boas.
\newblock Regularity of the {S}zeg{\H o} projection in weakly pseudoconvex
  domains.
\newblock {\em Indiana Univ. Math. J.}, 34(1):217--223, 1985.

\bibitem{MR871667}
H.~P. Boas.
\newblock The {S}zeg{\H o} projection: {S}obolev estimates in regular domains.
\newblock {\em Trans. Amer. Math. Soc.}, 300(1):109--132, 1987.

\bibitem{MR971689}
H.~P. Boas, So-Chin Chen, and E.~J. Straube.
\newblock Exact regularity of the {B}ergman and {S}zeg{\H o} projections on
  domains with partially transverse symmetries.
\newblock {\em Manuscripta Math.}, 62(4):467--475, 1988.

\bibitem{MR999739}
H.~P. Boas and E.~J. Straube.
\newblock Complete {H}artogs domains in {$\mathbf C^2$} have regular {B}ergman
  and {S}zeg{\H o} projections.
\newblock {\em Math. Z.}, 201(3):441--454, 1989.

\bibitem{MR1133741}
H.~P. Boas and E.~J. Straube.
\newblock Sobolev estimates for the complex {G}reen operator on a class of
  weakly pseudoconvex boundaries.
\newblock {\em Comm. Partial Differential Equations}, 16(10):1573--1582, 1991.

\bibitem{MR2836124}
Bo-Yong Chen and Siqi Fu.
\newblock Comparison of the {B}ergman and {S}zeg\"o kernels.
\newblock {\em Adv. Math.}, 228(4):2366--2384, 2011.

\bibitem{MR1094488}
So-Chin Chen.
\newblock Real analytic regularity of the {S}zeg{\H o} projection on circular
  domains.
\newblock {\em Pacific J. Math.}, 148(2):225--235, 1991.

\bibitem{MR1370592}
M.~Christ.
\newblock Global {$C^\infty$} irregularity of the
  {$\overline\partial$}-{N}eumann problem for worm domains.
\newblock {\em J. Amer. Math. Soc.}, 9(4):1171--1185, 1996.

\bibitem{MR1381988}
M.~Christ.
\newblock The {S}zeg{\H o} projection need not preserve global analyticity.
\newblock {\em Ann. of Math. (2)}, 143(2):301--330, 1996.

\bibitem{MR906810}
K.~P. Diaz.
\newblock The {S}zeg{\H o} kernel as a singular integral kernel on a family of
  weakly pseudoconvex domains.
\newblock {\em Trans. Amer. Math. Soc.}, 304(1):141--170, 1987.

\bibitem{MR0430315}
K.~Diederich and J.~E. Fornaess.
\newblock Pseudoconvex domains: an example with nontrivial {N}ebenh\"ulle.
\newblock {\em Math. Ann.}, 225(3):275--292, 1977.

\bibitem{MR2087043}
K.~Hirachi.
\newblock A link between the asymptotic expansions of the {B}ergman kernel and
  the {S}zeg\"o kernel.
\newblock In {\em Complex analysis in several variables---{M}emorial
  {C}onference of {K}iyoshi {O}ka's {C}entennial {B}irthday}, volume~42 of {\em
  Adv. Stud. Pure Math.}, pages 115--121. Math. Soc. Japan, Tokyo, 2004.

\bibitem{MR1128596}
C.~O. Kiselman.
\newblock A study of the {B}ergman projection in certain {H}artogs domains.
\newblock In {\em Several complex variables and complex geometry, {P}art 3
  ({S}anta {C}ruz, {CA}, 1989)}, volume~52 of {\em Proc. Sympos. Pure Math.},
  pages 219--231. Amer. Math. Soc., Providence, RI, 1991.

\bibitem{MR1846625}
S.~G. Krantz.
\newblock {\em Function theory of several complex variables}.
\newblock AMS Chelsea Publishing, Providence, RI, 2001.
\newblock Reprint of the 1992 edition.

\bibitem{MR3160814}
S.~G. Krantz.
\newblock A direct connection between the {B}ergman and {S}zeg{\H o}
  projections.
\newblock {\em Complex Anal. Oper. Theory}, 8(2):571--579, 2014.

\bibitem{MR2393268}
S.~G. Krantz and M.~M. Peloso.
\newblock Analysis and geometry on worm domains.
\newblock {\em J. Geom. Anal.}, 18(2):478--510, 2008.

\bibitem{2014arXiv1410.8490K}
S.~G. {Krantz}, M.~M. {Peloso}, and C.~{Stoppato}.
\newblock {Bergman kernel and projection on the unbounded worm domain}.
\newblock {\em ArXiv e-prints}, October 2014.

\bibitem{MR2448387}
S.G. Krantz and M.~M. Peloso.
\newblock The {B}ergman kernel and projection on non-smooth worm domains.
\newblock {\em Houston J. Math.}, 34(3):873--950, 2008.

\bibitem{MR2030575}
L.~Lanzani and E.~M. Stein.
\newblock Szeg\"o and {B}ergman projections on non-smooth planar domains.
\newblock {\em J. Geom. Anal.}, 14(1):63--86, 2004.

\bibitem{MR3084008}
L.~Lanzani and E.~M. Stein.
\newblock Cauchy-type integrals in several complex variables.
\newblock {\em Bull. Math. Sci.}, 3(2):241--285, 2013.

\bibitem{MR1452048}
J.~D. McNeal and E.~M. Stein.
\newblock The {S}zeg{\H o} projection on convex domains.
\newblock {\em Math. Z.}, 224(4):519--553, 1997.

\bibitem{MonThesis}
A.~Monguzzi.
\newblock {\em {On the regularity of singular integrals operators on complex
  domains}}.
\newblock PhD thesis, Universit\`{a} degli Studi di Milano, 2015.

\bibitem{2015arXiv150400287M}
A.~{Monguzzi}.
\newblock {Hardy spaces and the {S}zeg{\H o} projection of the non-smooth worm
  domain {$D'_\beta$}}.
\newblock {\em ArXiv e-prints}, April 2015.

\bibitem{MR979602}
A.~Nagel, J.-P. Rosay, E.~M. Stein, and S.~Wainger.
\newblock Estimates for the {B}ergman and {S}zeg{\H o} kernels in {${\bf
  C}^2$}.
\newblock {\em Ann. of Math. (2)}, 129(1):113--149, 1989.

\bibitem{MR0450623}
D.~H. Phong and E.~M. Stein.
\newblock Estimates for the {B}ergman and {S}zeg\"o projections on strongly
  pseudo-convex domains.
\newblock {\em Duke Math. J.}, 44(3):695--704, 1977.

\bibitem{MR0473215}
E.~M. Stein.
\newblock {\em Boundary behavior of holomorphic functions of several complex
  variables}.
\newblock Princeton University Press, Princeton, N.J.; University of Tokyo
  Press, Tokyo, 1972.
\newblock Mathematical Notes, No. 11.

\bibitem{MR835396}
E.~J. Straube.
\newblock Exact regularity of {B}ergman, {S}zeg{\H o} and {S}obolev space
  projections in nonpseudoconvex domains.
\newblock {\em Math. Z.}, 192(1):117--128, 1986.

\end{thebibliography}
\end{document}